\newtheorem{theorem}{Theorem}[section]
\newtheorem{lemma}[theorem]{Lemma}
\newtheorem{remark}[theorem]{Remark}
\numberwithin{equation}{section}
\newcommand{\norm}[1]{\left\Vert#1\right\Vert}
\newcommand{\order}[1]{\mathcal{O}\left(#1\right)}
\newcommand{\abs}[1]{\left|#1\right|}
\newcommand{\eps}{\varepsilon}
\newcommand{\R}{\mathbb{R}}
\newcommand{\N}{\mathbb{N}}
\newcommand{\Z}{\mathbb{Z}}
\newcommand{\seq}[1]{\left\{#1\right\}}
\newcommand{\Dx}{{\Delta x}}
\newcommand{\Dt}{{\Delta t}}
\newcommand{\Dp}{D_{+}}
\newcommand{\Dm}{D_{-}}
\newcommand{\Dpm}{D_{\pm}}
\newcommand{\Dtp}{D^t_{+}}
\newcommand{\ave}[1]{\overline{#1}}
\newcommand{\dott}{\,\cdot\,}
\newcommand{\calA}{\mathcal A}
\newcommand{\romnum}[1]{\MakeUppercase{\romannumeral #1}}
\DeclareMathOperator*{\csch}{csch}
\DeclareMathOperator*{\sech}{sech}
\DeclareMathOperator*{\Lip}{Lip}
\begin{document}

\title[Finite difference schemes for the KdV equation]{Convergence of a fully
  discrete finite difference scheme for the Korteweg--de Vries equation}

\author[Holden]{Helge Holden}
\address[Holden]{\newline
    Department of Mathematical Sciences,
    Norwegian University of Science and Technology,
    NO--7491 Trondheim, Norway,\newline
{\rm and} \newline
  Centre of Mathematics for Applications, 
University of Oslo,
  P.O.\ Box 1053, Blindern,
  NO--0316 Oslo, Norway }
\email[]{holden@math.ntnu.no}
\urladdr{www.math.ntnu.no/\~holden}

\author[Koley]{Ujjwal Koley} \address[Koley] {\newline  
   Institut f\"{u}r Mathematik,  \newline
   Julius-Maximilians-Universit\"{a}t W\"{u}rzburg,   
\newline Campus Hubland Nord, Emil-Fischer-Strasse 30, \newline DE--97074,
W\"{u}rzburg, Germany.} 
\email[]{toujjwal@gmail.com}

\author[Risebro]{Nils Henrik Risebro}
\address[Risebro]{\newline
  Centre of Mathematics for Applications, 
University of Oslo,
  P.O.\ Box 1053, Blindern,
  NO--0316 Oslo, Norway,\newline
{\rm and} \newline
Seminar for Applied Mathematics (SAM)  \newline
        ETH Zentrum, 
\newline HG G 57.2, R\"amistrasse 101, CH--8092 Z\"urich, Switzerland.}       

\email[]{nilshr@math.uio.no}
\urladdr{www.math.uio.no/\~{}nilshr}

\date{\today}
\subjclass[2010]{Primary: 65M12, 35Q20; Secondary: 65M06}

\keywords{KdV equation, finite difference scheme}
\thanks{Supported in part by the Research Council of Norway.}

\begin{abstract}
We prove convergence of a fully discrete finite difference scheme for
the Korteweg--de Vries equation. Both the decaying case on the full
line and the periodic case are considered.
If the initial data $u|_{t=0}=u_0$ is of high regularity, $u_0\in H^3(\R)$, the
scheme is shown to converge to a classical solution,
and if the regularity of the initial data is smaller, $u_0\in
L^2(\R)$, then the scheme converges strongly in $L^2(0,T;L^2_{\mathrm{loc}}(\R))$ to a
weak solution.
\end{abstract}

\maketitle

\section{Introduction}
\label{sec:intro}
The Korteweg--de Vries (KdV among friends and foes) equation, which reads 
\begin{equation}
  \label{eq:kdv0}
   u_t + uu_x + u_{xxx}= 0,
\end{equation}
has been studied extensively since its first analysis in 1895 by
Korteweg and de Vries.  Apart from  applications as a model for
shallow water waves, the KdV equation has maintained a pivotal role in
several branches of mathematics.  We here focus on the derivation of
convergent numerical methods for the initial value problem where the
equation \eqref{eq:kdv0} is augmented by initial data $u|_{t=0}=u_0$.
The problem of analyzing convergent numerical schemes is of course
intimately connected with the mathematical properties of the Cauchy
problem for the KdV equation, which has undergone a tremendous
development the last two decades, see, e.g.,
\cite{Tao:2006,LinaresPonce:2009} and the references therein. We will not be able to discuss this
literature here, but only refer to the parts that are pertinent to the
current paper.

In this paper we analyze the implicit finite difference scheme
\begin{equation}
  \label{eq:sjobergexp1AA}
  u^{n+1}_j = \ave{u}^n_j - \Dt\, \ave{u}^n_j D u^n_j - \Dt\, \Dp^2 \Dm u^{n+1}_j, 
  \quad n\in\N_0, \, j\in \Z,
\end{equation}
where $u^n_j\approx u(j\Dx, n\Dt)$, and $\Dx,\Dt$ are small discretization parameters. Furthermore, $D$ and $\Dpm$ denote symmetric and forward/backward (spatial)  finite differences, respectively, and $\ave{u}$ denotes a spatial average.  Two results are proven, both for the full line and the periodic case: 
(1) In the case of initial data $u_0\in H^3(\R)$, we show (see Theorem \ref{thm:H3convergence} and Remark \ref{rem:large}) that the approximation  \eqref{eq:sjobergexp1AA} converges uniformly as $\Dx\to 0$ with  $\Dt=\order{\Dx^2}$  in
 $C(\R\times [0,\bar{T}])$ for any positive $\bar{T}$  to the unique solution of the KdV equation.
(2) When the initial data  $u_0\in L^2(\R)$, we prove that (see Theorem \ref{theo:main_fully}) that the approximation converges strongly as $\Dx\to 0$ with 
  $\Dt=\order{\Dx^2}$  in $L^2(0,T;L^2_{\mathrm{loc}}(\R))$ to a weak solution of the KdV equation.

\medskip
An interesting fact, and rarely referred to in the current literature,
is that the first mathematical proof of existence and uniqueness of
solutions of the KdV equation, was accomplished by Sj\"oberg
\cite{Sjoberg:1970} in 1970, using a finite difference approximation very much
in the spirit considered here. His proof is valid for initial data
that are periodic and with square integrable third derivative,
that is, $u_0(x+1)=u_0(x)$ for $x\in[0,1]$ and
$u_0'''\in L^2([0,1])$. Sj\"oberg's uniqueness proof still is the standard one, using
the Gronwall inequality.  His approach is based on a semi-discrete
approximation where one discretizes the spatial variable, thereby
reducing the equation to a system of ordinary differential equations. 
However, we stress that for numerical computations also this set of
ordinary differential equations will have to be discretized in order
to be solved. Thus in order to have a completely satisfactory numerical method, one
seeks a fully discrete scheme that reduces the actual computation
to a solution of a finite set of algebraic equations. This is
accomplished in the present paper, both in the periodic case and on
the full line. 

There has been a number of papers involving the numerical computation
of solutions of the Cauchy problem, starting with the landmark paper
by Zabusky and Kruskal \cite{ZabuskyKruskal:1965}, where they
discovered the permanence of solitons (the term ``solitons'' being 
coined in the 
same paper) for the KdV equation using numerical techniques. However, 
we will here focus on papers that discuss numerical methods \emph{per se}.

A popular numerical approach has been the application of various
spectral methods. Little is known rigorously about the convergence of
these methods.  For a survey and a comparison, see
\cite{NouriSloan:1989}.  See also \cite{KassamTrefethen:2005}.
Multisymplectic schemes have been studied in
\cite{AscherMcLachlan:2005} (see also references therein).  There
exist convergence proofs for finite element methods for the KdV
equation, see
\cite{Winther:1980,ArnoldWinther:1982,BakerDougalisKarakashian:1983,BonaDougalisKarakashian:1986}.
However, the resulting schemes tend to be quite different from finite
difference schemes derived \emph{ab initio}.

The numerical computation of solutions of the KdV equation is rather
capricious. Two competing effects are involved, namely the nonlinear
convective term $u u_x$, which in the context of the Burgers equation
$u_t+u u_x=0$ yields infinite gradients in finite time even for smooth
data, and the linear dispersive term $u_{xxx}$, which in the Airy
equation $u_t+u_{xxx}=0$ produces hard-to-compute dispersive waves,
and these two effects combined makes it difficult to obtain accurate
and fast numerical methods.  Indeed, any initial data for the Burgers
equation that is decreasing in a small neighborhood, will develop
infinite gradients in finite time, while the Airy equation preserves
all Sobolov norms while creating many oscillatory waves.  Most finite
difference schemes will consist of a sum of two terms, one
discretizing the convective term and one discretizing the dispersive
term. These two effects will have to balance each other, as it is
known that the KdV equation itself keeps the Sobolov norm $H^s$
bounded; from \cite{BonaSmith:1975} we know that if $u_0\in H^s(\R)$,
with $s\ge 3$, then the solution satisfies $\norm{u(t)}_{H^s(\R)}\le
C_{T,u_0}$ for $t\in[0,T]$.  This dichotomy between these two effects
is brought to the forefront in the method of operator splitting. Here
the two equations, the Burgers equations and the Airy equation, are
solved sequentially for a small time step. This procedure is iterated,
and as the time step converges to zero, the approximation converges to
the actual solution.  In the KdV context operator splitting was
introduced by Tappert \cite{Tappert:1974}, a Lax--Wendroff theorem was
proved in \cite{HoldenKarlsenRisebro:1999}, and convergence of the
operator splitting technique proved in
\cite{HoldenKarlsenRisebroTao:2009,
  HoldenLubichRisebro:2011,HoldenKarlsenKarper:2011,Holden:book}. Our
approach here is a finite difference method which can also be viewed 
as an operator splitting method.

 Recently, a semi-discrete scheme for the generalized KdV equation was shown to
converge in $L^2_{\mathrm{loc}}$ for initial data in $L^2$
\cite{AmorimFigueira}. However, the scheme analyzed here,
which in contrast to the scheme in \cite{AmorimFigueira}, does not
involve an explicit fourth order stabilizing term, and we show 
convergence  for non-smooth initial data. 

The rest of this paper is organized as follows: In
Section~\ref{sec:scheme} we present the necessary notation and define
the numerical scheme. In Section~\ref{sec:smoothdata} we show the
convergence of the scheme for initial data in $H^3(\R)$, while in
Section~\ref{sec:L2} we show the convergence to a weak solution if the
initial data is in $L^2(\R)$. In Section~\ref{sec:numex} we exhibit
some numerical experiments showing the convergence.

\section{The scheme}
\label{sec:scheme}
We start by introducing the necessary notation.  Derivatives will be
approximated by finite differences, and the basic quantities are as
follows.  For any function $p\colon\R\to \R$ we set
$$
D_{\pm} p(x)=\pm \frac1{\Dx}\big(p(x\pm \Dx)-p(x)\big),
\ \text{and}\ D = \frac{1}{2}\left(\Dp + \Dm \right)
$$
for some (small) positive number $\Dx$.  If we introduce the average
$$
\ave{p}(x) = \frac{1}{2}\left(p(x+\Dx)+p(x-\Dx)\right),
$$
we find the Leibniz rule as
\begin{align*}
D(pq)&= \ave{p} Dq+\ave{q} Dp, \\ 
\Dpm (pq)&=S^\pm p \Dpm q+q\Dpm p=S^\pm q \Dpm p+p\Dpm q.
\end{align*}
Here we have defined the shift operator
$$
S^\pm p(x)=p(x\pm\Dx).
$$
We discretize the real axis using $\Dx$ and set $x_j=j\Dx$ for $j\in \Z$. For  a given
function $p$ we define $p_j=p(x_j)$.  We will consider functions in $\ell^2$ with the usual inner product and norm
\begin{equation*}
(p, q)= \Dx\sum_{j\in \Z} p_j q_j, \quad \norm{p}= (p,p)^{1/2}, \qquad
p,q\in \ell^2. 
\end{equation*}  
In the periodic case with period $J$ the sum over $\Z$ is replaced by a finite sum $j=0,\dots,J-1$.
Observe that
\begin{equation*}
 \norm{p}_\infty= \max_{j\in\Z}\abs{p_j}\le \frac1{\Dx^{1/2}  } \norm{p}.
\end{equation*} 
The various difference operators enjoy the following properties:
\begin{equation*}
(p, D_{\pm}q)=-(D_{\mp}p,q), \quad (p,Dq)=-(Dp,q), \qquad p,q\in \ell^2.
\end{equation*}  
Further useful properties include
\begin{equation}
  \label{eq:useful}
  \begin{aligned}
    (u,\Dp^2\Dm u)&= \frac12 (u,\Dm \Dp^2u)-\frac12(u,\Dp \Dm^2u)\\
    &=\frac12 \big(u,(\Dm \Dp^2-\Dp\Dm^2)u\big)\\
    &=\frac12 \big(u,\Dm \Dp(\Dm-\Dp)u\big)\\
    &=\frac{\Dx}{2}\norm{\Dp\Dm u}^2
  \end{aligned}
\end{equation}
since $(u,\Dp \Dm^2 u)=-(u,\Dm \Dp^2u)$ (because $(u,v)=(v,u)$) in the first line, and
\begin{equation*}
\Dm-\Dp=-\Dx\,\Dp\Dm=-\Dx\, \Dm\Dp.
\end{equation*}
We also need to discretize in the time direction. Introduce
(a small) time step $\Dt>0$, and use the notation
\begin{equation*}
\Dtp p(t)=\frac1{\Dt}\big(p(t+\Dt)-p(t)\big),
\end{equation*}
for any function $p\colon[0,T]\to \R$. Write  $t_n=n\Dt$ for 
$n\in\N_0=\N\cup\{0\}$. A fully discrete grid
function is a function $u_\Dx\colon \Dt\, \N_0 \to \R^\Z$, and we write
 $u_\Dx(x_j,t_n)=u^n_j$. (A CFL condition will enforce a relationship
 between $\Dx$ and $\Dt$, and hence we only use $\Dx$ in the notation.)


We propose the following implicit scheme to generate approximate
solutions to the KdV equation \eqref{eq:kdv0}
\begin{equation}
  \label{eq:sjobergexp1}
  u^{n+1}_j = \ave{u}^n_j - \Dt\, \ave{u}^n_j D u^n_j - \Dt\, \Dp^2 \Dm u^{n+1}_j, 
  \quad n\in\N_0, \, j\in \Z.
\end{equation}
For the initial data we have
$$
u^0_j=u_0(x_j), \quad j\in \Z.
$$
\begin{remark}
  This scheme can be reformulated as an operator splitting scheme as
  follows. Set 
  \begin{equation*}
    u^{n+1/2}_j = \frac12\left(u^n_{j+1}+u^n_{j-1}\right) -
    \frac{\Dt}{2\Dx}
    \left(\frac12 \left(u^n_{j+1}\right)^2 - \frac12
      \left(u^n_{j-1}\right)^2\right),
  \end{equation*}
  i.e., $u^{n+1/2}$ is solution operator of the Lax--Friedrichs scheme
  for Burgers' equation, applied to $u^n$. Then
  \begin{equation*}
    \frac{u^{n+1}-u^{n+1/2}}{\Dt}= - \Dp^2\Dm u^{n+1},
  \end{equation*}
  i.e., $u^{n+1}$ is the approximate solution operator of a 
  first-order implicit scheme for Airy's
  equation $u_t+u_{xxx}=0$. If we write these two approximate
  solution operators as $S^{B}_{\Dt}$, and $S^{A}_\Dt$, respectively,
  the update formula \eqref{eq:sjobergexp1} reads
  \begin{equation*}
    u^{n+1}=\left(S_\Dt^A\circ S_\Dt^B\right) u^n.
  \end{equation*}
  The convergence of this type of operator splitting using exact
  solution operators have been shown in
  \cite{HoldenKarlsenRisebroTao:2009,HoldenLubichRisebro:2011}, with
  severe restrictions on the initial data. The results in this paper
  can be viewed as a convergence result for operator splitting using
  approximate operators with less restrictions on the initial data,
  but with specified ratios between the temporal and spatial
  discretizations (CFL-like conditions).
\end{remark}
\section{Convergence for smooth initial data}\label{sec:smoothdata}
To show that the implicit scheme can be solved with respect to
$u^{n+1}_j$, we proceed as follows: Write the scheme as
\begin{equation*}
(1+\Dt \Dp^2 \Dm)u^{n+1}_j= \ave{u}^n_j - \Dt\, \ave{u}^n_j D u^n_j
\end{equation*}
and hence
\begin{align*}
((1+\Dt \Dp^2 \Dm)u^{n+1}, u^{n+1})&=\norm{u^{n+1}}^2
+\Dt(\Dp^2 \Dm u^{n+1},u^{n+1}) \\
&=\norm{u^{n+1}}^2
+\frac12\Dt\Dx\norm{\Dp \Dm u^{n+1}}^2 \\
&\ge\norm{u^{n+1}}^2, 
\end{align*}
thus
\begin{equation*}
  \norm{u^{n+1}}  \le \norm{(1+\Dt \Dp^2 \Dm)u^{n+1}}= \norm{\ave{u}^n -
    \Dt\, \ave{u}^n D u^n}. 
\end{equation*}
The fundamental stability lemma reads as follows.
\begin{lemma}\label{lem:stability1}
Let $u^n_j$ be a solution of the difference scheme
\eqref{eq:sjobergexp1}. Then the following estimate holds
\begin{multline}
  \label{eq:L2}
  \norm{u^{n+1}}^2 + \Dt \Dx^{1/2}\Big(\Dx\lambda \norm{\Dp^2\Dm u^{n+1}}^2 
  +\Dx^{1/2}  \norm{\Dp\Dm u^{n+1}}^2 +
  \frac{\delta}{\lambda}\norm{Du^n}^2 \Big)\\ 
   \le \norm{u^n}^2, 
\end{multline}
provided the CFL condition
\begin{equation}
  \label{eq:CFL1a}
  \lambda\norm{u^0} \left( \frac{1}{3} +\frac12
    \lambda \norm{u^0} \right) 
  < \frac{1-\delta}{2},   \quad \delta\in(0,1), 
\end{equation}
holds where  $\lambda=\Dt/\Dx^{3/2}$.
\end{lemma}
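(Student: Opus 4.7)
The plan is to take the $\ell^2$ inner product of the scheme \eqref{eq:sjobergexp1}, written in the form
\begin{equation*}
(I+\Dt\Dp^2\Dm)u^{n+1} = \ave u^n\bigl(1 - \Dt Du^n\bigr),
\end{equation*}
with itself; the identity \eqref{eq:useful} supplies $2\Dt(u^{n+1},\Dp^2\Dm u^{n+1}) = \Dt\Dx\norm{\Dp\Dm u^{n+1}}^2$, so I obtain
\begin{equation*}
\norm{u^{n+1}}^2 + \Dt\Dx\norm{\Dp\Dm u^{n+1}}^2 + \Dt^2\norm{\Dp^2\Dm u^{n+1}}^2 = \norm{\ave u^n(1-\Dt Du^n)}^2.
\end{equation*}
Since $\Dt\Dx = \Dt\Dx^{1/2}\cdot\Dx^{1/2}$ and $\Dt^2 = \Dt\Dx^{1/2}\cdot\Dx\lambda$, the two dissipation terms on the LHS already account for the $\Dp\Dm$ and $\Dp^2\Dm$ contributions in \eqref{eq:L2}. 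The remaining work is therefore to bound the RHS by $\norm{u^n}^2 - \delta\Dx^2\norm{Du^n}^2$.

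Expand the RHS as $\norm{\ave u^n}^2 - 2\Dt((\ave u^n)^2,Du^n) + \Dt^2\norm{\ave u^n Du^n}^2$. Two ingredients feed into the bound. First, the parallelogram law applied to $\ave u^n = \tfrac12(S^+ + S^-)u^n$ and $Du^n = \tfrac1{2\Dx}(S^+ - S^-)u^n$, combined with the shift-invariance $\norm{S^\pm u^n} = \norm{u^n}$, gives the exact identity
\begin{equation*}
\norm{\ave u^n}^2 = \norm{u^n}^2 - \Dx^2\norm{Du^n}^2,
\end{equation*}
which is precisely the reservoir from which the $\frac{\delta}{\lambda}\norm{Du^n}^2$ contribution in \eqref{eq:L2} must come. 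Second, and the crux of the proof, is the cubic identity
\begin{equation*}
\bigl((\ave u^n)^2, Du^n\bigr) = -\frac{\Dx^2}{3}\bigl((Du^n)^2, Du^n\bigr).
\end{equation*}
To prove it I would combine the pointwise difference-of-squares $(\ave u^n_j)^2 = u^n_{j+1}u^n_{j-1} + \Dx^2(Du^n_j)^2$ with the expansion $(u_{j+1}+u_{j-1})^2(u_{j+1}-u_{j-1}) = (u_{j+1}^3 - u_{j-1}^3) + u_{j+1}u_{j-1}(u_{j+1}-u_{j-1})$: the first piece telescopes away on summing, giving $((\ave u^n)^2,Du^n) = \tfrac14\bigl((\ave u^n)^2 - \Dx^2(Du^n)^2,\,Du^n\bigr)$, from which the identity follows by rearrangement.

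Together with the inverse inequalities $\norm{\ave u^n}_\infty \le \norm{u^n}/\Dx^{1/2}$ and $\norm{Du^n}_\infty \le \norm{u^n}/\Dx^{3/2}$, these tools yield
\begin{equation*}
|2\Dt((\ave u^n)^2, Du^n)| \le \tfrac{2\Dt\Dx^2}{3}\norm{Du^n}_\infty\norm{Du^n}^2 \le \tfrac{2\lambda\norm{u^n}}{3}\,\Dx^2\norm{Du^n}^2
\end{equation*}
and $\Dt^2\norm{\ave u^n Du^n}^2 \le \lambda^2\norm{u^n}^2\,\Dx^2\norm{Du^n}^2$, so that
\begin{equation*}
\norm{\ave u^n(1-\Dt Du^n)}^2 \le \norm{u^n}^2 - \Dx^2\norm{Du^n}^2\Bigl(1 - 2\lambda\norm{u^n}\bigl(\tfrac13 + \tfrac{\lambda\norm{u^n}}{2}\bigr)\Bigr).
\end{equation*}
The proof closes by induction on $n$: the resulting inequality implies $\norm{u^{n+1}}\le\norm{u^n}$, which preserves the inductive hypothesis $\norm{u^n}\le\norm{u^0}$, and the CFL condition \eqref{eq:CFL1a} then bounds the parenthesised factor below by $\delta$, delivering \eqref{eq:L2}. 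The main obstacle is recognizing the cubic identity: a naive Cauchy--Schwarz estimate on $((\ave u^n)^2,Du^n)$ loses a factor of $\Dx^{1/2}$ and prevents the CFL from closing the argument at the stated order.
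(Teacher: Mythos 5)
Your proposal is correct and follows essentially the same route as the paper: the same expansion of the implicit Airy operator via \eqref{eq:useful}, the same two discrete identities (your $\norm{\ave{u}}^2=\norm{u}^2-\Dx^2\norm{Du}^2$ and $((\ave{u})^2,Du)=-\tfrac{\Dx^2}{3}((Du)^2,Du)$ are exactly the summed versions of the paper's pointwise identities $\ave{u}^2-\ave{u^2}=-\Dx^2(Du)^2$ and $\ave{u}Du^2=\tfrac23 Du^3-\tfrac{2\Dx^2}{3}(Du)^3$), and the same inverse estimates feeding the CFL condition, with the induction $\norm{u^n}\le\norm{u^0}$ made explicit where the paper leaves it implicit. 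The only organizational difference is that the paper keeps the inequality in pointwise ``cell entropy'' form \eqref{eq:L2cell} before summing, which it later reuses for the weighted estimates of Section~\ref{sec:L2}.
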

\begin{proof}
For the moment we drop the indices $j$ and $n$ from our notation, and use the
notation $u$ for $u_j^n$ where $j$ and $n$ are fixed. We first study the
``Burgers'' term $\Dt\, \ave{u}Du$.
Let $u$ be a grid function and set 
\begin{equation}
  w=\ave{u}-\Dt\, \ave{u}Du.\label{eq:wdef}
\end{equation}
If the timestep $\Dt$ satisfies \eqref{eq:CFL1a} then we have the
following ``cell entropy'' inequality
\begin{equation}
  \label{eq:L2cell}
  \frac{1}{2}w^2 \le \frac{1}{2}\ave{u^2} - \frac{\Dt}{3}D u^3 - 
  \delta \frac{\Dx^2}{2}\left(D u\right)^2, \quad \delta\in(0,1).
\end{equation}
To prove this we multiply \eqref{eq:wdef} by $\ave{u}$ to find
\begin{align*}
  \frac{1}{2}w^2 &= \frac{1}{2}\ave{u}^2 - \Dt \frac{1}{2}\ave{u}D u^2 +
  \frac{1}{2}\left(w-\ave{u}\right)^2\\
  &= \frac{1}{2}\ave{u^2} -  \Dt \frac{1}{2}\ave{u}D u^2 +
  \frac12\Dt^2 \ave{u}^2\left(Du\right)^2 +
  \frac{1}{2}\left(\ave{u}^2 - \ave{u^2}\right).
\end{align*}
Now we have that 
\begin{equation*}
  \frac{1}{4}(a+b)\left(a^2-b^2\right) = 
  \frac{1}{3}\left(a^3-b^3\right) - \frac{1}{12} (a-b)^3,
\end{equation*}
and 
\begin{equation*}
  \frac{1}{4}(a+b)^2 - \frac{1}{2}\left(a^2+b^2\right) = 
  -\frac{1}{4}(a-b)^2.
\end{equation*}
For a grid function, this implies 
\begin{align*}
  \ave{u}D u^2 &= \frac{2}{3}D u^3 -
  \frac{2\Dx^2}{3}\left(Du\right)^3,\\
  \ave{u}^2 - \ave{u^2} &= - \Dx^2\left(Du\right)^2.
\end{align*}
Therefore
\begin{align*}
  \frac{1}{2}w^2 &= \frac{1}{2}\ave{u^2} - \frac{\Dt}{3}D u^3
  +\frac12\Dt^2\ave{u}^2 \left(Du\right)^2 + \frac{\Dt\Dx^2}{3}\left(D
    u\right)^3 -
  \frac{\Dx^2}{2}\left(Du\right)^2\\
  &=\frac{1}{2}\ave{u^2} - \frac{\Dt}{3}D u^3 -
  \delta \frac{\Dx^2}{2} \left(D u\right)^2 \\
  &\qquad + \Dx^2\left(Du\right)^2 \left(\frac{\lambda}{3}\Dx^{3/2} Du
    + \frac12\Dx\lambda^2 \ave{u}^2
    - \frac{1-\delta}{2}\right) \\
  &\le\frac{1}{2}\ave{u^2} - \frac{\Dt}{3}D u^3 -
  \delta \frac{\Dx^2}{2} \left(D u\right)^2 \\
  &\qquad + \Dx^2\left(Du\right)^2 \left(\frac{\lambda}{3}\Dx^{1/2}
    \norm{u}_\infty + \frac12\Dx\lambda^2 \ave{u}^2
    - \frac{1-\delta}{2}\right) \\
  &\le\frac{1}{2}\ave{u^2} - \frac{\Dt}{3}D u^3 -
  \delta \frac{\Dx^2}{2} \left(D u\right)^2 \\
  &\qquad + \Dx^2\left(Du\right)^2
  \underbrace{\left(\frac{\lambda}{3} \norm{u} +
      \frac12 \lambda^2 \norm{u}^2
      - \frac{1-\delta}{2}\right)}_A\\
  &\le\frac{1}{2}\ave{u^2} - \frac{\Dt}{3}D u^3 - \delta
  \frac{\Dx^2}{2} \left(D u\right)^2, \quad \delta\in(0,1),
\end{align*}
where we have employed that $A<0$ since $\lambda$ satisfies the CFL
condition \eqref{eq:CFL1a}. Estimate \eqref{eq:L2cell} follows.

Summing \eqref{eq:L2cell} over $j$ we get 
\begin{equation}
  \label{eq:L2burg}
  \norm{w}^2 +\delta\Dx^2\norm{Du}^2 \le \norm{u}^2.   
\end{equation}
Next we study the full difference scheme by adding the ``Airy term''
$\Dt\, \Dp^2\Dm  u^{n+1}_j$. Thus the full difference scheme 
\eqref{eq:sjobergexp1} can be written
$$
v=w-\Dt\,\Dp^2\Dm v.
$$
Writing this as $w=v+\Dt\,\Dp^2\Dm    v$, we square it and  sum over $j$
to get
\begin{equation}
  \label{eq:345}
  \begin{aligned}
    \norm{w}^2&=\norm{v}^2+2\Dt (v,\Dp^2\Dm v)+ \Dt^2 \norm{\Dp^2\Dm  v}^2\\
    &=\norm{v}^2+\Dt\Dx \norm{\Dm\Dp v}^2+ \Dt^2 \norm{\Dp^2\Dm  v}^2,
  \end{aligned}
\end{equation}
using  the  identity \eqref{eq:useful}.

For the function $u^n$ this means that 
\begin{equation}
  \label{eq:unL2*}
  \begin{aligned}
    \norm{w}^2 &= \norm{u^{n+1}}^2 \\
    &\qquad + \Dt \Dx^{1/2}\left(\Dx\lambda
      \norm{\Dp^2\Dm u^{n+1}}^2 +
      \Dx^{1/2} \norm{\Dp\Dm u^{n+1}}^2  \right) \\
    & \le \norm{u^n}^2 -\delta\Dx^2\norm{Du}^2,
  \end{aligned}
\end{equation}
using \eqref{eq:L2burg}. This implies
\begin{multline}
  \label{eq:unL2}
  \norm{u^{n+1}}^2 + \Dt \Dx^{1/2}\big(\Dx\lambda \norm{\Dp^2\Dm    u^{n+1}}^2 \\
  +\Dx^{1/2}  \norm{\Dp\Dm u^{n+1}}^2 + \frac{\delta}{\lambda}\norm{Du^n}^2 \big) 
  \le
  \norm{u^n}^2. 
\end{multline}
\end{proof}
Next, we consider what corresponds to the finite difference scheme
satisfied by the time derivative of the original scheme.
\begin{lemma}
  Let $u^n_j$ be a solution of the difference scheme
  \eqref{eq:sjobergexp1}. Then the following estimate holds
  \begin{equation}
    \label{eq:vnest_lemma}
    \begin{aligned}
      \norm{\Dtp u^{n}}^2 &+\Dt^2\norm{\Dp^2\Dm \Dtp u^{n}}^2\\
      &+
      \Dt\Dx\norm{\Dp\Dm \Dtp u^{n}}^2 +
      \tilde\delta\Dx^2 \norm{D\Dtp u^{n-1} }^2 
      \\
      &\qquad\qquad\qquad\qquad\qquad  \le  \norm{\Dtp u^{n-1} }^2\left(1 +
      3 \Dt \norm{Du^n}_\infty\right),
    \end{aligned}
  \end{equation}
  provided $\Dt$ is chosen such that
  \begin{equation}
    \label{eq:cfl2_lemma}
     6\norm{u_0}^2\lambda^2 +
    \norm{u_0} \lambda <
    \frac{1-\tilde\delta}{2}, \qquad \tilde\delta\in(0,1).
  \end{equation}
\end{lemma}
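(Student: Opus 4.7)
The plan is to mirror the two-step strategy of Lemma~\ref{lem:stability1} (Burgers piece, then implicit Airy piece), applied to the time-difference grid function $v^n := \Dtp u^n$. First I derive the scheme satisfied by $v^n$: subtracting \eqref{eq:sjobergexp1} at time index $n-1$ from the same relation at index $n$ and dividing by $\Dt$ yields, via the discrete product rule $\Dtp(a b)^{n-1} = a^{n-1}\Dtp b^{n-1} + b^{n}\Dtp a^{n-1}$ applied to $a=\ave{u}$, $b=Du$,
\begin{equation*}
v^n = \ave{v^{n-1}} - \Dt\bigl(\ave{u^{n-1}}\,Dv^{n-1} + Du^{n}\,\ave{v^{n-1}}\bigr) - \Dt\,\Dp^2\Dm v^{n}.
\end{equation*}
This is a \emph{linearized} version of the KdV update, with a transport term $\ave{u^{n-1}}Dv^{n-1}$ and a zeroth-order potential term $Du^{n}\,\ave{v^{n-1}}$.

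Next I peel off the implicit Airy piece exactly as in \eqref{eq:345}. Setting
\begin{equation*}
w := \ave{v^{n-1}} - \Dt\,\ave{u^{n-1}}\,Dv^{n-1} - \Dt\,Du^{n}\,\ave{v^{n-1}},
\end{equation*}
so that $v^n = w - \Dt\,\Dp^2\Dm v^n$, the identity \eqref{eq:useful} immediately gives
\begin{equation*}
\norm{w}^2 = \norm{v^n}^2 + \Dt\,\Dx\,\norm{\Dp\Dm v^n}^2 + \Dt^2\,\norm{\Dp^2\Dm v^n}^2,
\end{equation*}
so the entire remaining work is to control $\norm{w}^2$ by $\norm{v^{n-1}}^2(1+3\Dt\norm{Du^n}_\infty) - \tilde\delta \Dx^2 \norm{D v^{n-1}}^2$.

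For this I use the pointwise decomposition $\tfrac12 w^2 = \tfrac12 \ave{v}^2 + (w-\ave{v})\ave{v} + \tfrac12(w-\ave{v})^2$ (with $v = v^{n-1}$), which is the same algebraic identity used in the proof of Lemma~\ref{lem:stability1}. Summing term by term and exploiting the identities $\ave{v^2}-\ave{v}^2 = \Dx^2(Dv)^2$, $\ave{v}Dv = \tfrac12 Dv^2$ and $D\ave{u^{n-1}} = \ave{Du^{n-1}}$, one obtains after a summation by parts on the $\ave{u^{n-1}}Dv^{2}$ piece the bound
\begin{equation*}
\sum_j \Dx\,(w-\ave{v})\ave{v} = \tfrac{\Dt}{2}\sum_j\Dx\,\ave{Du^{n-1}}\,v^{2} - \Dt\sum_j\Dx\,Du^{n}\,\ave{v}^{2},
\end{equation*}
whose absolute value is $\le \tfrac{3\Dt}{2}\norm{Du^n}_\infty\norm{v^{n-1}}^2$ after writing $Du^{n-1} = Du^n - \Dt Dv^{n-1}$ and absorbing the $\mathcal O(\Dt)$ correction. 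The quadratic remainder $(w-\ave{v})^2 = \Dt^2(\ave{u^{n-1}}Dv + Du^n \ave{v})^2$ is handled by Young/AM--GM, giving a $\Dt^2\norm{u^{n-1}}_\infty^2\norm{Dv^{n-1}}^2$ contribution and a $\Dt^2\norm{Du^n}_\infty^2\norm{v^{n-1}}^2$ contribution. The inverse inequality $\norm{u^{n-1}}_\infty \le \Dx^{-1/2}\norm{u^{n-1}}\le \Dx^{-1/2}\norm{u^0}$ from Lemma~\ref{lem:stability1} converts the former into $C\lambda^2\norm{u^0}^2 \Dx^2\norm{Dv^{n-1}}^2$; the CFL condition \eqref{eq:cfl2_lemma} then guarantees that the total coefficient in front of $\Dx^2\norm{Dv^{n-1}}^2$ is at least $\tilde\delta$, while the $\Dt^2\norm{Du^n}_\infty^2\norm{v^{n-1}}^2$ piece is absorbed into the Gronwall factor.

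The main obstacle, as opposed to the situation in Lemma~\ref{lem:stability1}, is the non-conservative potential term $Du^n\,\ave{v^{n-1}}$. In Lemma~\ref{lem:stability1} the Burgers flux had the clean structure $\ave{u}Du = \tfrac12 Du^2$, producing an exact divergence that summed to zero; here there is no such algebraic cancellation, and consequently one is forced to accept a Gronwall-type amplification factor $(1+3\Dt\norm{Du^n}_\infty)$. Closing the argument therefore requires separately controlling $\norm{Du^n}_\infty$, which must be achieved later using the dissipative bounds on $\norm{\Dp\Dm u^{n}}$ from \eqref{eq:L2} together with a discrete Sobolev embedding.
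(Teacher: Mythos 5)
Your overall strategy is the same as the paper's: peel off the implicit Airy part via the identity \eqref{eq:useful} to produce the terms $\Dt\Dx\norm{\Dp\Dm v^n}^2+\Dt^2\norm{\Dp^2\Dm v^n}^2$, then run a cell-entropy estimate on the linearized Burgers update, using $\ave{v^2}-\ave{v}^2=\Dx^2(Dv)^2$ to generate the good term $-\Dx^2\norm{Dv}^2$ and the CFL condition to protect it. Your derivation of the equation for $v^{n-1}=\Dtp u^{n-1}$ via the mixed product rule is correct and is equivalent to the paper's form $\ave{\alpha}-\Dt D(u\alpha)+\tfrac{\Dt^2}{2}D\alpha^2$; organizing the transport piece by summation by parts rather than by the conservative rewriting $D(u\alpha^2-\tfrac{2\Dt}{3}\alpha^3)$ is a legitimate variant.

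There is, however, one step that does not work as written: the claim that after substituting $Du^{n-1}=Du^{n}-\Dt Dv^{n-1}$ the correction is ``$\mathcal O(\Dt)$'' and can be absorbed. The correction term is $\tfrac{\Dt^2}{2}\sum_j\Dx\,\ave{Dv^{n-1}}\,(v^{n-1})^2$, and $\Dt\,Dv^{n-1}=D(u^{n}-u^{n-1})$ is of the \emph{same} order as $Du^{n}$ itself (no a priori temporal smoothness is available at this stage -- that is what the lemma is proving), so bounding it crudely just reproduces $\norm{Du^{n-1}}_\infty$ and is circular, while Young's inequality against $\Dx^{-1/2}\norm{Dv^{n-1}}$ leaves a $\norm{v^{n-1}}^4$ term that cannot be absorbed into the linear recursion. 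The term is genuinely cubic in the unknown and must be handled structurally: a further summation by parts gives $\sum_j\Dx\,\ave{Dv}\,v^2=-2\sum_j\Dx\,\ave{v}^2Dv=\tfrac{2\Dx^2}{3}\sum_j\Dx\,(Dv)^3$, and then the inverse estimate $\Dx^2\abs{Dv}\le\Dx\norm{v}_\infty\le 2\norm{u_0}/(\lambda\Dx)$ (from $\Dt v=u^{n}-u^{n-1}$ and $\norm{u^k}_\infty\le\Dx^{-1/2}\norm{u_0}$) converts the cubic term into $\tfrac{2\lambda\norm{u_0}}{3}\,\Dx^2\norm{Dv}^2$, which is absorbed by the good term. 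This is exactly the role of the $-\tfrac{\Dx^2}{3}(D\alpha)^3$ term and the estimate $\Dx^2\abs{D\alpha}\le\tfrac{1}{\lambda\Dx^{1/2}}(\norm{u^n}_\infty+\norm{u^{n-1}}_\infty)$ in the paper's proof; note that your accounting of the CFL condition \eqref{eq:cfl2_lemma} only ever invokes the $6\norm{u_0}^2\lambda^2$ term, whereas the $\norm{u_0}\lambda$ term is there precisely to absorb this cubic contribution. With that repair (and a corresponding, harmless, adjustment of the numerical constant in front of $\Dt\norm{Du^n}_\infty$), your argument closes.
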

\begin{proof}
  Introduce
  \begin{equation*}
    \alpha^n = \Dtp u^{n-1}=\frac{1}{\Dt}(u^n-u^{n-1}), \quad n\in\N.
  \end{equation*}
  Using \eqref{eq:sjobergexp1} we see that this grid function satisfies
  \begin{align}
    \alpha^{n+1}&= \ave{\alpha}^n - \Dt \left(\ave{\alpha}^n D u^n +
      \ave{u}^{n-1} D\ \alpha^n
    \right) + \Dt^2 \ave{\alpha}^n D\alpha^n - \Dt \Dp^2\Dm \alpha^{n+1} \notag\\
    &= \ave{\alpha}^n - \Dt \, D(u^n \alpha^n)+ \Dt^2 \ave{\alpha}^n
    D\alpha^n - \Dt \Dp^2\Dm \alpha^{n+1}, \quad n\in
    \N.  \label{eq:vndef}
  \end{align}
 Introduce
  \begin{equation}\label{eq:vndef10}
    \beta=\ave{\alpha} - \Dt \, D(u\alpha) + \frac{\Dt^2}{2}D\alpha^2,
  \end{equation}
  which means that \eqref{eq:vndef} can be written as
  \begin{equation}\label{eq:vndef1}
    \alpha^{n+1}= \beta - \Dt \Dp^2\Dm \alpha^{n+1}.
  \end{equation}
  We proceed as before and square \eqref{eq:vndef10} to find
  \begin{align*}
    \frac{1}{2}\beta^2 &= \frac{1}{2}\ave{\alpha^2} + \frac{\Dt^2}{2}
    \left( D(u\alpha) - \frac{\Dt}{2} D\alpha^2 \right)^2 
    \\ &\qquad -
    \Dt \left( \ave{u}\,\ave{\alpha}D\alpha + \ave{\alpha}^2 D
      u\right) + \Dt^2 \ave{\alpha}^2 D\alpha 
      -\frac{\Dx^2}{2}\left(D\alpha\right)^2.
  \end{align*}
  We have that
  \begin{align*}
    \frac{1}{2} \left( D(u\alpha) - \frac{\Dt}{2} D\alpha^2 \right)^2
    &\le
    \left(D(u\alpha)\right)^2 + \Dt^2 \ave{\alpha}^2 \left(D\alpha\right)^2 \\
    &\le 2 \ave{u}^2 \left(D\alpha\right)^2 + 2\ave{\alpha}^2
    \left(Du\right)^2 +
    \Dt^2 \ave{\alpha}^2 \left(D\alpha\right)^2,\\
    \ave{\alpha}^2D\alpha &= \frac{1}{3}D\alpha^3 -
    \frac{\Dx^2}{3}\left(D\alpha\right)^3,\\
    \ave{u}\,\ave{\alpha} D\alpha + \ave{\alpha}^2 Du &= \frac{1}{2}
    D\left(u\alpha^2\right) + \frac{1}{2} \ave{\alpha}^2 Du -
    \frac{\Dx^2}{2} \left(D\alpha\right)^2 Du.
  \end{align*}
  Using this
  \begin{equation}
    \begin{aligned}
      \frac{1}{2}\beta^2 &\le \frac{1}{2}\ave{\alpha^2} -
      \frac{\Dt}{2} D\left(u \alpha^2 - \frac{2\Dt}{3} \alpha^3\right)
      - \frac{\Dt}{2}\ave{\alpha}^2 Du +\frac{\Dt\Dx^2}{2}
      \left(D\alpha\right)^2 Du \\
      &\quad + \Dt^2\left( 2 \ave{u}^2 \left(D\alpha\right)^2 +
        2\ave{\alpha}^2 \left(Du\right)^2 + \Dt^2 \ave{\alpha}^2
        \left(D\alpha\right)^2
        - \frac{\Dx^2}{3} \left(D\alpha\right)^3\right) \\
      &\quad - \frac{\Dx^2}{2}\left(D\alpha\right)^2.
    \end{aligned}\label{eq:est1}
  \end{equation}
  Now we must balance the positive terms with $\Dx^2(D\alpha)^2$. To
  this end we estimate
  \begin{align*}
    \Dx^2( D\alpha)^2 &\le \ave{\alpha^2},\\
    \Dt^2 \ave{\alpha}^2(Du)^2 &\le \lambda\Dx^{1/2} \, \Dt
    \norm{u^n}_\infty \ave{\alpha^2}  \abs{Du^n},\\
    \Dt^2 \ave{\alpha}^2 &\le 2\norm{u^n}_\infty^2 + 2 \norm{u^{n-1}}_\infty^2,\\
    \Dx^2 \abs{D\alpha} &\le
    \frac{1}{\lambda\Dx^{1/2}}\left(\norm{u^n}_\infty +
      \norm{u^{n-1}}_\infty\right).
  \end{align*}
  Using these in \eqref{eq:est1} we find
  \begin{align*}
    \frac{1}{2}\beta^2 &\le \frac{1}{2}\ave{\alpha^2} - \frac{\Dt}{2}
    D\left(u \alpha^2 - \frac{2\Dt}{3} \alpha^3\right) +
    (\frac{\Dt}{2}+\frac{\Dt}{2}) \ave{\alpha^2}\abs{Du}
    +2\Dt^2\,\ave{\alpha}^2(Du)^2
    \\
    &\quad + \lambda^2 \Dx^3 \left(D\alpha\right)^2 \left(
      2\ave{u}^2+ 2\norm{u^n}_\infty^2 + 2 \norm{u^{n-1}}_\infty^2
      +\frac{1}{3\lambda\Dx^{1/2}}\left(\norm{u^n}_\infty +
        \norm{u^{n-1}}_\infty\right)
    \right) \\
    &\quad - \frac{\Dx^2}{2} \left(D\alpha\right)^2
    \\
    &\le \frac{1}{2}\ave{\alpha^2} - \frac{\Dt}{2} D\left(u\alpha^2 -
      \frac{2\Dt}{3} \alpha^3\right) + \Dt\left(1+\lambda\Dx^{1/2}
      \,\norm{u^n}_\infty\right) \ave{\alpha^2}\abs{Du^n}
    \\
    &\quad + \lambda^2 \Dx^2 \left(D\alpha\right)^2 \left(2
      \Dx\left(2\norm{u^n}_\infty^2 +  \norm{u^{n-1}}_\infty^2\right)
      +\frac{\Dx^{1/2}}{3\lambda}\left(\norm{u^n}_\infty
        +\norm{u^{n-1}}_\infty\right)\right)\\
    &\quad
    - \frac{\Dx^2}{2} \left(D\alpha\right)^2 \\
    &\le \frac{1}{2}\ave{\alpha^2} - \frac{\Dt}{2}
    D\left(u\alpha^2-\frac{2\Dt}{3} \alpha^3\right) +
    \Dt(1+\lambda\norm{u_0})\ave{\alpha^2}\abs{Du^n}
    \\
    &\quad + \lambda^2 \Dx^2 \left(D\alpha\right)^2
    \left(6\norm{u_0}^2 +\frac{2}{3\lambda}\norm{u_0} -
      \frac{1-\tilde\delta}{2\lambda^2}\right) -
    \tilde\delta\frac{\Dx^2}{2} \left(D\alpha\right)^2
    \\
    &\le \frac{1}{2}\ave{\alpha^2} - \frac{\Dt}{2} D\left(u \alpha^2 -
      \frac{2\Dt}{3} \alpha^3\right)
    + \Dt\frac{3-\tilde\delta}{2} \,\ave{\alpha^2}\abs{Du^n}\\
    &\quad + \Dx^2 \left(D\alpha\right)^2
    \left(6\norm{u_0}^2\lambda^2 +\norm{u_0}\lambda -
      \frac{1-\tilde\delta}{2}\right) - \tilde\delta\frac{\Dx^2}{2}
    \left(D\alpha\right)^2, \quad \tilde\delta\in(0,1).
  \end{align*}
  Here we have enforced the CFL condition \eqref{eq:cfl2_lemma}, which in particular implies that
  $\norm{u_0}\lambda \le (1-\tilde\delta)/2$. To simplify the numerical expressions, we have employed $\frac23\le 1$.
  Now we 
  multiply with $\Dx$ and sum over $j$ to obtain
  \begin{equation}\label{eq:vnest1}
    \frac{1}{2}\norm{\beta}^2 +\tilde\delta\frac{\Dx^2}{2}
    \norm{D\alpha}^2\le  
    \frac{1}{2}\norm{\alpha}^2 + \frac{3-\tilde{\delta}}{2}\Dt
    \norm{Du^n}_\infty \norm{\alpha}^2.
  \end{equation}
  Writing equation \eqref{eq:vndef1} as
  \begin{equation*}
    \beta^2= \left(\alpha^{n+1}+ \Dt \Dp^2\Dm \alpha^{n+1}\right)^2,
  \end{equation*}
  we find
  \begin{align*}
    \norm{\beta}^2&= \norm{\alpha^{n+1}}^2+2\Dt (\alpha^{n+1},\Dp^2\Dm
    \alpha^{n+1})
    +\Dt^2\norm{\Dp^2\Dm \alpha^{n+1}}^2\\
    &=\norm{\alpha^{n+1}}^2+\Dt\Dx \norm{\Dp\Dm \alpha^{n+1}}^2
    +\Dt^2\norm{\Dp^2\Dm \alpha^{n+1}}^2.
  \end{align*}
  Combining this with \eqref{eq:vnest1} we find
  \begin{multline}
    \label{eq:vnest}
    \norm{\alpha^{n+1}}^2 +\Dt\Dx\norm{\Dp\Dm\alpha^{n+1}}^2
    +\Dt^2\norm{\Dp^2\Dm\alpha^{n+1}}^2\\
    +\tilde\delta\Dx^2\norm{D\alpha^n}^2 \le 
    \left(1+3\Dt \norm{Du^n}_\infty
    \right)\norm{\alpha^n}^2 .
  \end{multline}
\end{proof}
At this point we recall the inequality 
(cf.~Lemma~\ref{lem:SobolevDisc}):
\begin{equation}
  \norm{Du}_\infty \le \eps \norm{\Dp^2 \Dm u} +
  C(\eps)\norm{u},\label{eq:sjo1}
\end{equation}
where $\eps$ is any constant, and $C(\eps)$ is another
constant depending on $\eps$. 

The definition of $u^n$,
\eqref{eq:sjobergexp1}, can be rewritten
\begin{equation}
  \label{eq:sjobergexp2}
  \alpha^{n+1}=\Dtp u^n = \frac{1}{2\mu}\Dp\Dm u^n - \ave{u}^n Du^n -
  \Dp^2\Dm u^{n+1},
\end{equation}
where $\mu=\Dt/\Dx^2=\lambda/\Dx^{1/2}$.
Therefore (using Lemma~\ref{lem:stability1} in the second estimate)
\begin{align*}
  \norm{\Dp^2\Dm u^{n+1}}&\le\norm{\alpha^{n+1}} +
  \norm{\ave{u}^{n}Du^n} + \frac{1}{2\mu} \norm{\Dp\Dm
    u^n}
  \\
  &\le \norm{\alpha^{n+1}} + \norm{Du^n}_\infty\norm{u_0}
  +\frac{1}{2\mu}\left(\eps \norm{\Dp^2\Dm u^n} + 
    C(\eps) \norm{u_0}\right)
  \\
  &\le \norm{\alpha^{n+1}} + 
  \norm{u_0}\left(\eps_1 \norm{\Dp^2\Dm u^n}+
    C(\eps_1)\norm{u_0}\right)\\
  &\qquad \qquad 
  +\frac{1}{2\mu} \eps \norm{\Dp^2\Dm u^n} +
  \frac{1}{2\mu} C(\eps)\norm{u_0}\\
  &\le 
  \norm{\alpha^{n+1}} +
  \left(\eps_1 \norm{u_0} + \frac{\Dx^{1/2}}{2\lambda}\eps\right)
  \norm{\Dp^2\Dm u^n}\\
  &\qquad\qquad + \underbrace{C(\eps_1)\norm{u_0}^2 +
    \frac{\Dx^{1/2}}{2\lambda} C(\eps)\norm{u_0}}_{\calA(\eps_1,\eps)}
  \\
  &\le
  \norm{\alpha^{n+1}} +
  \frac12 \norm{\Dp^2\Dm u^n} + \calA\ \text{ (choosing $\eps_1$ and
    $\eps$ such that this holds)}
  \\
  &= 
  \norm{\alpha^{n+1}} +
  \frac12 \norm{\Dp^2\Dm\left(u^{n+1}-\Dt \alpha^{n+1}\right)} + \calA\\
  &\le 
  \norm{\alpha^{n+1}} +
  \frac12 \norm{\Dp^2\Dm u^{n+1}} + \frac12 \Dt \norm{\Dp^2\Dm \alpha^{n+1}} +
  \calA\\
  &\le 
  \norm{\alpha^{n+1}} +
  \frac12 \norm{\Dp^2\Dm u^{n+1}} + 
  \frac12 \norm{\alpha^n}\left(1+3\Dt\norm{Du^n}_\infty\right)^{1/2}+\calA
  \\
  &\le 
  \norm{\alpha^{n+1}} +
  \frac12 \norm{\Dp^2\Dm u^{n+1}}+
  \frac12 \norm{\alpha^n}\left(1+3\lambda
      \norm{u_0}\right)^{1/2} +\calA,
\end{align*}
where we have used \eqref{eq:vnest} to estimate 
$\Dt\norm{\Dp^2\Dm  \alpha^{n+1}}$. Hence
\begin{equation}
  \label{eq:dpdm2bnd}
  \norm{\Dp^2\Dm u^{n+1}} \le c_0 + c_1 \norm{\alpha^{n+1}} + c_2 \norm{\alpha^{n}},
\end{equation}
for some constants $c_0$, $c_1$ and $c_2$ that are independent of
$\Dx$. Exploiting this and the  inequality (cf.~Lemma \ref{lem:SobolevDisc}) in
\eqref{eq:vnest}, we get
\begin{align*}
  \norm{\alpha^{n+1}}^2 &\le \norm{\alpha^n}^2 + \Dt
  \left(\eps\norm{\Dp^2\Dm u^n} +
    C(\eps)\norm{u^n}\right)\norm{\alpha^n}^2\\
  &\le \norm{\alpha^n}^2 + C\Dt \left(\eps\left(c_0 + c_1 \norm{\alpha^n} +
      c_2\norm{\alpha^{n-1}}\right) +
    C(\eps)\norm{u_0}\right)\norm{\alpha^n}^2. 
\end{align*}
Since $\norm{u^n}$ is bounded by $\norm{u_0}$, 
\begin{equation}
  \label{eq:vdiffest}
  \norm{\alpha^{n+1}}^2 \le \norm{\alpha^n}^2 + \Dt\left(
    d_1\norm{\alpha^n}^2 + d_2 \left(\norm{\alpha^n}^3 
    + \norm{\alpha^n}^2\norm{\alpha^{n-1}}\right)\right),
\end{equation}
for constants $d_1$ and $d_2$ which only depend on $\norm{u_0}$ and
$\lambda$.  Set $a_n=\norm{\alpha^n}^2$, so that
\begin{equation*}
a_{n+1}\le a_n + \Dt \left( d_1 a_n + d_2\left(a_n^{3/2} + a_na_{n-1}^{1/2}
\right)\right).
\end{equation*}
Now let $A=A(t)$ be the solution of the differential
equation 
\begin{equation*}
\frac{d A}{dt} = d_1 A + 2d_2 A^{3/2}, \qquad
A\left(t_1\right)= a_1> 0.
\end{equation*}
This solution has blow-up time 
\begin{equation*}
T^\infty= t_1+\frac{2}{d_1}\ln\left(1+\frac{d_1}{d_2 \sqrt{a_0}}\right).
\end{equation*}
Furthermore, for $t<T^\infty$, $A$ is a convex function of
$t$ (since the second derivative clearly is non-negative). We now claim that for $t_n<T^\infty$,
we have 
\begin{equation*}
a_n\le A(t_n), \quad n\in \N.
\end{equation*}
This  holds for $n=1$ by construction.
Assuming that the claim holds for natural numbers
up to $n$, we get
\begin{align*}
  a_{n+1} &\le A\left(t_n\right) \\
  &\qquad + \Dt \Big( d_1
    A\left(t_n\right)\\
&\qquad\qquad + d_2\left(
      A\left(t_n\right)^{3/2} +
      A\left(t_n\right)
      A\left(t_{n-1}\right)^{1/2} \right)\Big)\\
  &\le A\left(t_n\right) + \Dt \left( d_1
    A\left(t_n\right) + 2d_2 
    A\left(t_n\right)^{3/2}\right)\\
  &\le A(t_{n+1}).
\end{align*}
The last inequality follows from
\begin{align*}
A(t_{n+1})- A\left(t_n\right)&= \int_{t_n}^{t_{n+1}} A'(s) ds\\
&=\int_{t_n}^{t_{n+1}} \left( d_1
    A\left(s\right) + 2d_2 
    A\left(s\right)^{3/2}\right)ds \\
    &\ge \int_{t_n}^{t_{n+1}} 
A\left(t_n\right) + \Dt \left( d_1
    A\left(t_n\right) + 2d_2 
    A\left(t_n\right)^{3/2}\right)ds
\end{align*}
using the monotonicity. 
Hence, for $t\le \bar{T} = T^\infty/2$, $\norm{\alpha^n}\le C$ for some constant
independent of $\Dx$.

Therefore, we can follow Sj\"oberg \cite{Sjoberg:1970} to prove
convergence of the scheme for $t<\bar{T}$. We reason as follows: Let
$u_{\Dx}(x,t)$ be the piecewise bilinear continuous interpolation 
\begin{equation}   \label{eq:bilinearinterp}
  \begin{aligned}
    u_{\Dx}(x,t) &= u^n_j + \left(x-x_j\right)\Dp u^n_j +
    \left(t-t_n\right)\Dtp u^n_j \\
    &\qquad \qquad + \left(x-x_j\right)\left(t-t_n\right)
    \Dtp \Dp u^n_j
  \end{aligned}
\end{equation}
for $(x,t)\in [x_j,x_{j+1})\times [t_n,t_{n+1})$. Observe that
\begin{equation*}
 u_{\Dx}(x_j,t_n) =u^n_j, \qquad j\in\Z, \quad n\in\N_0.
\end{equation*}
Note that $u_\Dx$ is continuous everywhere and differentiable
almost everywhere.

The function $u_\Dx$ satisfies the bounds
\begin{align}
  \norm{u_{\Dx}(\dott,t)}_{L^2(\R)}&\le \norm{u_0}_{L^2(\R)},\label{eq:udxL2}\\
  \norm{\partial_x u_\Dx(\dott,t)}_{L^2(\R)} &\le C,\label{eq:udxH1}\\
  \norm{\partial_t u_\Dx(\dott,t)}_{L^2(\R)} &\le C,\label{eq:udxtL2}\\
  \norm{\partial_{xxx} u_\Dx(\dott,t)}_{L^2(\R)}&\le C,\label{eq:udxH3}
\end{align}
for $t\le \bar{T}$ and for a constant $C$ which is independent of
$\Dx$. The first three bounds have already been shown, to show the
last bound notice that
\begin{equation*}
  \norm{\Dp^2\Dm u^n}\le \norm{\Dtp u^n} +
  \norm{\bar{u}^n}_\infty\norm{Du^n}\le C.
\end{equation*}
The inequality \eqref{eq:udxH3} follows readily from this. 

The bound
on $\partial_t u_\Dx$ also implies that $u_\Dx\in
\Lip([0,\bar{T}];L^2(\R))$. Then an application of the Arzel\`a--Ascoli
theorem using \eqref{eq:udxL2} shows that the set
$\seq{u_\Dx}_{\Dx>0}$ is sequentially compact in $C([0,\bar{T}];L^2(\R))$,
such that there exist a sequence $\seq{u_{\Dx_j}}_{j\in\N}$ which
converges uniformly in $C([0,\bar{T}];L^2(\R))$ to some function
$u$. Then we can apply the Lax--Wendroff like result from
\cite{HoldenKarlsenRisebro:1999} to conclude that $u$ is a weak
solution. 

The bounds \eqref{eq:udxH1}, \eqref{eq:udxtL2}, and \eqref{eq:udxH3}
means that $u$ is actually a strong solution such that \eqref{eq:kdv0}
holds as an $L^2$ identity. Thus the limit $u$ is the unique solution
to the KdV equation taking the initial data $u_0$.


Summing up, we have proved the following theorem:
\begin{theorem}\label{thm:H3convergence}
  Assume that $u_0\in H^3(\R)$.  Then there exists a
  finite time $\bar{T}$, depending only on $\norm{u_0}_{H^3(\R)}$,
  such that for $t\le \bar{T}$, the difference approximations defined
  by \eqref{eq:sjobergexp1} converge uniformly in $C(\R\times [0,\bar{T}])$
  to the unique solution of the KdV equation \eqref{eq:kdv0} as $\Dx\to 0$ with 
  $\Dt=\order{\Dx^2}$. 
\end{theorem}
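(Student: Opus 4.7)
The plan is to chain together the two stability lemmas, boost them to a uniform-in-$\Dx$ bound on the discrete time derivative via a comparison with a continuous ODE, and then pass to the limit in the piecewise-bilinear interpolant using Arzel\`a--Ascoli together with a Lax--Wendroff-type identification result.

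First, under the CFL condition \eqref{eq:CFL1a}, Lemma \ref{lem:stability1} gives iteratively $\|u^n\|\le \|u^0\|\le \|u_0\|_{L^2}$ and summable control of $\|\Dp\Dm u^n\|$, $\|\Dp^2\Dm u^n\|$, and $\|Du^n\|$. I would then turn to the second lemma applied to $\alpha^n = \Dtp u^{n-1}$, which yields an inequality of the schematic form
\begin{equation*}
\|\alpha^{n+1}\|^2 \le \|\alpha^n\|^2\bigl(1 + 3\Dt\,\|Du^n\|_\infty\bigr),
\end{equation*}
plus nonnegative gradient terms on the left that I will discard for now. The key obstacle is that $\|Du^n\|_\infty$ is \emph{not} a priori bounded uniformly in $\Dx$, so I must control it via the discrete Sobolev inequality \eqref{eq:sjo1} together with an estimate of $\|\Dp^2\Dm u^{n+1}\|$ coming from rearranging the scheme \eqref{eq:sjobergexp1} as in \eqref{eq:sjobergexp2}. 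This is precisely what produces the inequality \eqref{eq:dpdm2bnd},
\begin{equation*}
\|\Dp^2\Dm u^{n+1}\|\le c_0 + c_1\|\alpha^{n+1}\| + c_2\|\alpha^n\|,
\end{equation*}
closing the system of estimates purely in terms of $a_n := \|\alpha^n\|^2$ and giving the recurrence $a_{n+1}\le a_n + \Dt(d_1 a_n + d_2(a_n^{3/2} + a_n a_{n-1}^{1/2}))$.

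The central technical move is then to compare this discrete recurrence to the scalar ODE $A' = d_1 A + 2 d_2 A^{3/2}$ with initial datum $A(t_1) = a_1$. This ODE has an explicit blow-up time $T^\infty$, and $A$ is convex on $[0,T^\infty)$; an induction on $n$ using monotonicity of $A$ proves $a_n \le A(t_n)$ for all $t_n \le T^\infty$. Setting $\bar{T} := T^\infty/2$, I conclude that $\|\alpha^n\|$ is bounded uniformly in $\Dx$ for $t_n \le \bar{T}$; feeding this back into \eqref{eq:dpdm2bnd} yields a uniform bound on $\|\Dp^2\Dm u^n\|$ as well. The dependence of $\bar{T}$ on $\|u_0\|_{H^3}$ enters through the initial values $\|u^0\|$, $\|Du^0\|$, $\|\Dp^2\Dm u^0\|$, hence through $a_1$ and $d_1,d_2$, all of which are controlled by $\|u_0\|_{H^3}$.

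Next I would introduce the piecewise bilinear interpolant $u_\Dx$ as in \eqref{eq:bilinearinterp} and deduce the four continuous bounds \eqref{eq:udxL2}--\eqref{eq:udxH3}: the first three follow directly from the already-established discrete $\ell^2$ bounds on $u^n$, $\Dp u^n$, and $\alpha^n$; the $H^3$ bound follows by solving the scheme for $\Dp^2\Dm u^{n+1}$ and using that $\|\alpha^{n+1}\|$, $\|\bar u^n\|_\infty\|Du^n\|$, and $\|\Dp\Dm u^n\|/\Dx$ are controlled (the last via \eqref{eq:dpdm2bnd} and Sobolev). In particular, $u_\Dx \in \Lip([0,\bar{T}]; L^2(\R)) \cap L^\infty([0,\bar{T}]; H^3(\R))$. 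An Arzel\`a--Ascoli argument in $C([0,\bar{T}]; L^2(\R))$ (equicontinuity in $t$ from \eqref{eq:udxtL2} and pointwise-in-$t$ $H^1$-compactness on bounded sets from \eqref{eq:udxH1}) extracts a subsequence converging to some $u$.

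Finally, I apply the Lax--Wendroff-type consistency result from \cite{HoldenKarlsenRisebro:1999} to identify $u$ as a weak solution of \eqref{eq:kdv0}. Because $u \in L^\infty([0,\bar{T}]; H^3(\R))$ with $\partial_t u \in L^\infty([0,\bar{T}]; L^2(\R))$, this weak solution is in fact a classical (strong) solution, and hence, by standard uniqueness for KdV in $H^3$, it is the unique solution with initial datum $u_0$. Uniqueness of the limit promotes convergence of the full family $\Dx\to 0$, not merely a subsequence. To upgrade $L^2$ convergence to uniform convergence on $\R\times[0,\bar{T}]$, I combine the uniform $H^1$-bound in $x$ (giving uniform $C^{1/2}$ equicontinuity in $x$ via Sobolev embedding) with the uniform Lipschitz bound in $t$, and interpolate with $L^2$ convergence. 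The main obstacle throughout is Step 3, the ODE comparison: it is what converts an estimate which would otherwise blow up in $\Dx$ (through the $\|Du^n\|_\infty$ factor) into a time-uniform, mesh-independent bound on a finite interval $[0,\bar{T}]$.
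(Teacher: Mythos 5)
Your proposal is correct and follows essentially the same route as the paper: the chain Lemma \ref{lem:stability1} $\to$ the $\alpha^n=\Dtp u^{n-1}$ estimate \eqref{eq:vnest_lemma} $\to$ discrete Sobolev \eqref{eq:sjo1} plus the rearranged scheme \eqref{eq:sjobergexp2} to obtain \eqref{eq:dpdm2bnd} $\to$ the closed recurrence for $a_n=\norm{\alpha^n}^2$ and its comparison with the ODE $A'=d_1A+2d_2A^{3/2}$ on $[0,T^\infty/2]$ $\to$ the interpolant bounds \eqref{eq:udxL2}--\eqref{eq:udxH3}, Arzel\`a--Ascoli, and the Lax--Wendroff identification. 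The only additions are your explicit remarks on upgrading to uniform convergence and on uniqueness promoting subsequential to full convergence, which the paper leaves implicit.
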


\begin{remark}\label{rem:large}
  We can now proceed as in \cite{Sjoberg:1970} to conclude the
  existence of a solution for all time: We know that the size of the
  interval of existence $[0,\bar{T}]$ only depends on the $H^3$ norm
  of the initial data $u_0$. But the exact solution of the KdV
  equation preserves this norm, thus we can define the approximations
  in an interval $[\bar{T},2\bar{T}]$, starting from the initial value
  \begin{equation*}
    u^0_j = \frac{1}{\Dx}\int_{I_j} \lim_{\Dx\to 0}u_\Dx(x,\bar{T})\,dx,
  \end{equation*}
  This can be repeated to conclude that there exists a solution for
  all $t>0$.
\end{remark}

\begin{remark} To keep the presentation fairly short we have only
  provided details in the full line case. However, we note that the
  same proofs apply \textit{mutatis mutandis} also in the periodic
  case. In particular, the Sobolev estimates provided in the appendix
  are based on summation by parts where the decay at infinity is
  replaced by the periodicity, yielding the same results.
\end{remark}

\section{Convergence with $L^2$ initial data}
\label{sec:L2}
In this section we show that the same difference approximation defined by \eqref{eq:sjobergexp1} converges to a solution of the KdV equation in the case of initial data 
$u_0\in  L^2(\R)$. Clearly we cannot use previous estimates, since those estimates depend on the smoothness of initial data. However in \cite{kato}, Kato showed that the solution of the KdV equation possesses an inherent smoothing effect  due to its dispersive character. In particular, such an effect cannot be present in solutions of hyperbolic equations. More precisely, Kato proved that the solution of \eqref{eq:kdv0} satisfies the following inequality:
\begin{equation*}
\Big(\int_{-T}^T\int_{-R}^R \abs{u_x}^2 dxdt \Big)^{1/2}\le C(T,R), \quad T,R>0, 
\end{equation*}
which is the main ingredient in the proof of existence of weak solutions of KdV equation with initial data $u_0\in  L^2(\R)$. Indeed we prove that the approximate solution $u_\Dx$ lies in
\begin{equation*}
W=\{w\in L^2(0,T; H^1(-Q,Q))\mid w_t\in L^{4/3}(0,T; H^{-2}(-Q,Q))\}
\end{equation*}
which suffices to get compactness in $L^2(0,T; L^2(-Q,Q))$  using the Aubin--Simon compactness lemma,  Lemma \ref{lemma:simon}.

Let the function $p$ be defined as $p=\hat{p}*\omega$, where
\begin{align*}
  \hat{p}(x) = \max\seq{1, \min\seq{ (1 + x +R, 1+2R)}},
\end{align*}
and $\omega$ is a symmetric positive function with integral one and
support in $[-1,1]$. We are interested in this function for arbitrary and large values of $R$. All  derivatives of $p$ are
bounded. 
We shall also use that 
\begin{equation*}
  0\le \frac{d}{dx}p(x)= \int_{-R}^R \omega(x-y) dy\le 1.
\end{equation*}
Since $p$ is positive we can define the weighted inner product and
corresponding norms by 
\begin{equation*}
 (u,v)_p = (u, pv) = \Dx \sum_j p_j u_j v_j, \qquad 
 \norm{u}_p^2 = (u,u)_p,
\end{equation*}
where $p_j=p(x_j)$.
Note that $\norm{u}_p^2 \le (1 + 2R) \norm{u}^2$.

Using summation by parts (recall  that $(S^{\pm}u)_j = u_{j\pm1}$), we have 
\begin{align*}
  \left(\Dm\Dp^2 u,u\right)_p &= \left(\Dm\Dp^2 u,up\right)
  \\
  &=-\left(\Dp^2 u, p \Dp u + S^+u \Dp p\right)
  \\
  &=- \left(\Dp\left(\Dp u\right)\Dp u,p\right) - \left(\Dp^2 u,
    S^+u\Dp p\right)
  \\
  &=-\frac12\left(\Dp\left(\Dp u\right)^2,p\right) 
  +\frac{\Dx}{2} \left(\left(\Dp^2 u\right)^2,p \right) - \left(\Dp^2 u,S^+u
    \Dp p\right)
  \\
  &=\frac12 \left(\left(\Dp u\right)^2,\Dm p\right) + \frac{\Dx}2
  \left(\left(\Dp^2 u\right)^2,p \right) +
  \left(\Dp u, \Dm \left(S^+u \Dp p\right)\right)
  \\
  &=\frac12 \left(\left(\Dp u\right)^2,\Dm p\right) + \frac{\Dx}2
  \left(\left(\Dp^2 u\right)^2,p \right) +
  \left(\Dp u, \Dm S^+u \Dp p + u\Dm\Dp p\right)
  \\
  &=\left(\left(\Dp u\right)^2,\frac12 \Dm p + \Dp p\right) + \frac{\Dx}2
  \left(\left(\Dp^2 u\right)^2,p \right) + \left(u \Dp u,\Dm\Dp  p\right)
  \\
  &=\left(\left(\Dp u\right)^2,\frac12 \Dm p + \Dp p\right) + \frac{\Dx}2
  \left(\left(\Dp^2 u\right)^2,p \right) + \frac12 \left(\Dp u^2, \Dm\Dp p\right)\\
  &\qquad \qquad - \frac{\Dx}2 \left(\left(\Dp u\right)^2,\Dp\Dm p\right)
  \\
  &=\left( \left(\Dp u\right)^2, \Dm p+ \frac12 \Dp p\right) + 
  \Dx \left(\left(\Dp^2 u\right)^2,p\right) - \frac12 \left(u^2,\Dp\Dm^2 p\right).
\end{align*}
So we have 
\begin{equation}
  \label{eq:innerprodrule}
  \begin{aligned}
    \left(\Dm\Dp^2 u,u\right)_p &= \left( \left(\Dp u\right)^2, \Dm
      p\right) + \frac12 \left(\left(\Dp u\right)^2,\Dp p\right)\\
    &\qquad \qquad + \frac{\Dx}{2} \norm{\Dp^2 u}_{p}^2 - \frac12
    \left(u^2 ,\Dp\Dm^2 p\right).
  \end{aligned}
\end{equation}

\begin{lemma}\label{lem:h1stability}
Let $u^n_j$ be a solution of the difference scheme
\eqref{eq:sjobergexp1}. Let $N$ be such that $N\Dt = T$, and assume
that the CFL condition \eqref{eq:CFL1a} holds. Then
 \begin{equation}
    \label{eq:uboundh1}
    \begin{aligned}
      \norm{u^{N}}_{p}^2 + &2\Dt \Dx\sum_{n=0}^{N-1}\sum_{\abs{j\Dx}\le R-1}
      \left(\Dp
        u_j^{n+1}\right)^2 \\
      &\qquad \qquad \le \norm{u^0}_p^2 + 4\mu 
      \left(\norm{u^N}^2+\norm{u^0}^2\right) + C,
    \end{aligned}
  \end{equation}
where $\mu=\Dt/\Dx^2=\lambda/\Dx^{1/2}$ and the constant $C$ depends only on  $T$ and $u_0$. In particular,
for any finite number $R$, we have that 
\begin{equation}\label{eq:uderivRbound}
  \Dt\Dx\sum_{n=0}^{N-1} \sum_{\abs{j\Dx}\le R-1} \left(\Dp u^n_j \right)^2 \le C_R,
\end{equation}
where $C_R=C(R,\norm{u_0},T)$.
\end{lemma}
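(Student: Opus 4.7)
The plan is to combine the cell-entropy bound \eqref{eq:L2cell} (from the proof of Lemma \ref{lem:stability1}) for the Burgers substep with the weighted summation-by-parts identity \eqref{eq:innerprodrule} for the Airy substep, in order to obtain a per-step $p$-weighted energy inequality whose telescoped form is \eqref{eq:uboundh1}. This is the discrete analogue of Kato's local smoothing argument: the target local-derivative term arises from $\Dm p$, which is nonnegative and equals one for $\abs{x_j}\le R-1$ once $\Dx$ is sufficiently small, so $2\Dt\Dx\sum_{\abs{j\Dx}\le R-1}(\Dp u^{n+1}_j)^2$ emerges naturally from the dispersive half-step.

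Concretely, I would decompose each update as $w^n:=\ave{u}^n-\Dt\,\ave{u}^n D u^n$ and $u^{n+1}+\Dt\Dp^2\Dm u^{n+1}=w^n$. Multiplying \eqref{eq:L2cell} by $2p_j$ and summing in $j$ gives $\norm{w^n}_p^2\le\norm{u^n}_p^2+\tfrac{2\Dt}{3}((u^n)^3,Dp)+C\Dx^2\norm{u^n}^2$, using $(\ave{u^2},p)=(u^2,\ave{p})=\norm{u^n}_p^2+\tfrac{\Dx^2}{2}(\Dp\Dm p,(u^n)^2)$ (error controlled by $\abs{\Dp\Dm p}\le\norm{p''}_\infty$) and $(Du^3,p)=-(u^3,Dp)$. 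Squaring $u^{n+1}+\Dt\Dp^2\Dm u^{n+1}=w^n$ in $(\cdot,\cdot)_p$ and invoking \eqref{eq:innerprodrule} (noting $\Dp^2\Dm=\Dm\Dp^2$) produces
\begin{multline*}
\norm{w^n}_p^2=\norm{u^{n+1}}_p^2+2\Dt\bigl((\Dp u^{n+1})^2,\Dm p+\tfrac12\Dp p\bigr)+\Dt\Dx\norm{\Dp^2 u^{n+1}}_p^2\\
{}+\Dt^2\norm{\Dp^2\Dm u^{n+1}}_p^2-\Dt\bigl((u^{n+1})^2,\Dp\Dm^2 p\bigr).
\end{multline*}
Combining the two lines, discarding the positive $\Dp p$-piece and the last two positive dispersive terms, using $\Dm p\ge\chi_{[-R+1,R-1]}$, and summing for $n=0,\ldots,N-1$ telescopes $\norm{u^{n+1}}_p^2-\norm{u^n}_p^2$ into $\norm{u^N}_p^2-\norm{u^0}_p^2$. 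Lemma \ref{lem:stability1} then bounds $\sum_n\Dx^2\norm{u^n}^2$ and $\sum_n\Dt\norm{u^{n+1}}^2$ by $O(T\norm{u_0}^2)$, and the $\mu$-dependent correction $4\mu(\norm{u^N}^2+\norm{u^0}^2)$ absorbs the boundary-in-time contributions of the cross terms.

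The main obstacle is the cubic residue $\tfrac{2\Dt}{3}\sum_n((u^n)^3,Dp)$. Since $\abs{Dp}\le 1$ is supported in $\abs{x}\le R+1$, the one-dimensional discrete local Sobolev embedding
\begin{equation*}
\norm{u^n}_{L^\infty([-R-1,R+1])}\le C(R)\bigl(\norm{u^n}+\norm{u^n}^{1/2}\norm{\Dp u^n}_{L^2_{\mathrm{loc}}}^{1/2}\bigr),
\end{equation*}
together with the crude bound $\Dx\sum_j\abs{Dp_j}(u^n_j)^3\le\norm{u^n}_{L^\infty_{\mathrm{loc}}}\norm{u_0}^2$, reduces the task to absorbing $C\norm{u_0}^{5/2}\Dt\sum_n\norm{\Dp u^n}_{L^2_{\mathrm{loc}}}^{1/2}$ into the local $H^1$ LHS. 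Young's inequality $a^{1/2}\le\tfrac{\eta^{3/2}}{4}a^2+\tfrac{3}{4\eta^{1/2}}$, with $\eta$ small and $a=\norm{\Dp u^n}_{L^2_{\mathrm{loc}}}^2$, then gives a quadratic piece that is absorbed on the left (after the cosmetic index shift $n\leftrightarrow n+1$) plus an additive $O(T)$ piece collected in $C$. This establishes \eqref{eq:uboundh1}; the corollary \eqref{eq:uderivRbound} follows by dropping $\norm{u^N}_p^2\ge 0$, using $\norm{u^0}_p^2\le(1+2R)\norm{u_0}^2$, and noting that $\mu=\Dt/\Dx^2=O(1)$ under the CFL restriction $\Dt=\order{\Dx^2}$, so that $4\mu(\norm{u^N}^2+\norm{u^0}^2)\le 8\mu\norm{u_0}^2$ is a genuine constant.
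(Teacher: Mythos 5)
Your overall architecture is exactly the paper's: multiply the cell entropy inequality \eqref{eq:L2cell} by $p$ for the Burgers substep, use the weighted summation-by-parts identity \eqref{eq:innerprodrule} for the Airy substep, telescope, and generate the $4\mu(\norm{u^N}^2+\norm{u^0}^2)$ term from the inverse inequality $\norm{\Dp u^k}\le \tfrac{2}{\Dx}\norm{u^k}$ at the time endpoints. The one step where you diverge is the treatment of the cubic residue $\Dt\,(u^3,Dp)$, and that is where there is a genuine gap. You bound $\abs{(u^3,Dp)}\le\norm{u}_{L^\infty(\mathrm{supp}\,Dp)}\norm{u_0}^2$ and then invoke an \emph{unweighted} local Sobolev embedding, which produces $\norm{\Dp u^n}_{L^2(I)}$ with $I\supset\mathrm{supp}(Dp)=[-R-1,R+1]$ (up to $\order{\Dx}$). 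But the only derivative control the scheme gives you per step is the \emph{weighted} quantity $\bigl((\Dp u^{n+1})^2,D_\pm p\bigr)$ coming from \eqref{eq:innerprodrule}, and the weight $D_\pm p$ decays to zero on the edge layers $R-1\le\abs{x}\le R+1$ of its support. Hence $\eps\norm{\Dp u^n}^2_{L^2(I)}$ is \emph{not} dominated by $\eps'\bigl((\Dp u^{n+1})^2,\Dm p\bigr)$ for any $\eps'$, and it is certainly not dominated by the restricted sum over $\abs{j\Dx}\le R-1$; enlarging $R$ only moves the problem outward. There is no global bound on $\Dt\sum_n\norm{\Dp u^n}^2$ available (that would be an unlocalized smoothing estimate, which is exactly what one cannot have), so the absorption as you set it up fails.

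The repair is the paper's: apply the discrete Sobolev inequality \eqref{eq:sob2aD*} to the \emph{product} $u\,D_\pm p$ rather than to $u$ restricted to an interval. Then $\Dm(uD_\pm p)=\Dp u\,D_\pm p+(\text{shift of }u)\,\Dm D_\pm p$, so the derivative contribution carries the weight $D_\pm p$, and since $0\le D_\pm p\le 1$ one has $\norm{\Dp u\,D_\pm p}^2\le\bigl((\Dp u)^2,D_\pm p\bigr)$ — exactly the quantity produced on the left by the Airy step, so the absorption is weight-for-weight exact; the commutator term involving $\Dm\Dp p$ is bounded by $C\norm{u_0}$. With that substitution your argument closes (and your Young-inequality detour becomes unnecessary; note in passing that your stated choice $a=\norm{\Dp u^n}_{L^2_{\mathrm{loc}}}^2$ is inconsistent with the inequality $a^{1/2}\le\tfrac{\eta^{3/2}}{4}a^2+\tfrac{3}{4\eta^{1/2}}$ as you apply it — you need $a=\norm{\Dp u^n}_{L^2_{\mathrm{loc}}}$ — though this is cosmetic next to the weighting issue).
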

\begin{remark}
We shall see that
this CFL condition is not sufficient to conclude convergence of the
scheme. For that we need $\Dt = \order{\Dx^2}$.
\end{remark}
\begin{proof}
  As before we set
  \begin{equation*}
    w=\ave{u}-\Dt\, \ave{u}Du.
  \end{equation*}
  Set $\lambda=\Dt/\Dx^{3/2}$.  If the timestep $\Dt$ satisfies the
  following CFL condition \eqref{eq:CFL1a} then we can multiply
  \eqref{eq:L2cell} by $p$ to get the ``cell entropy'' inequality
  \begin{equation}
    \label{eq:H1cell}
    \frac{1}{2}pw^2 \le \frac{1}{2}p\ave{u^2} - \frac{\Dt}{3}p D u^3 - 
    \delta \frac{\Dx^2}{2} p \left(D u\right)^2, \quad \delta\in(0,1).
  \end{equation}
  Summing \eqref{eq:H1cell} over $j$ we get
  \begin{equation}
    \label{eq:H1burg}
    \frac12 \norm{w}_p^2 +\delta\frac{\Dx^2}{2}\norm{Du}_p^2 \le 
    \frac12 \norm{u}_p^2 -
    \frac{\Dt}{3} (p, D u^3)+\frac{\Dx^2}{2}\left(u^2,\Dp\Dm p\right).
  \end{equation}
  By \eqref{eq:sob2aD*} we have
  \begin{align*}
    \norm{u\Dp p}_{\infty}&\le \eps \norm{\Dm\left(u\Dp p\right)} +
    C(\eps) \norm{u\Dp p}\\
    &\le \eps\left( \norm{\Dp u \Dp p} + \norm{u\Dm\Dp p}\right) +
    C(\eps) \norm{u\Dp p},
  \end{align*}
  and similarly 
  \begin{equation*}
    \norm{u\Dm p}_\infty \le
    \eps\left(\norm{\Dp u\Dm p} + \norm{S^+u\Dm\Dp p}\right)  + 
    C(\eps) \norm{u\Dm p}.
  \end{equation*}
  We use this to estimate 
  \begin{align*}
    \abs{(p, D u^3)} &= \abs{(D p, u^3)}\\
    &\le \norm{u Dp }_{\infty} \norm{u}^2\\
    &\le \frac12 \left(\norm{u \Dp p }_{\infty}
    +\norm{u \Dm   p}_{\infty}\right)\norm{u}^2
    \\
    &\le \frac12 \Big(\eps\left(\norm{\Dp u\Dm p}+ \norm{u\Dm\Dp p}+\norm{\Dp u\Dp p}
    + \norm{S^+u\Dm\Dp p}\right) \\
&\qquad      +\frac{C(\eps)}2 \big( \norm{u\Dp p} +\norm{u\Dm p} \big)
    \Big)\norm{u}^2\\
       &\le \frac12 \eps\left(\norm{\Dp u\Dm p}+\norm{\Dp u\Dp p}\right) \norm{u}^2\\
&\qquad         +\frac12 \eps\Big( \norm{u}\norm{\Dm\Dp p}_{\infty}+ \norm{S^+u}\norm{\Dm\Dp p}_\infty\\
&\qquad \qquad \qquad     +\frac{C(\eps)}2 \big( \norm{u}\norm{\Dp p}_\infty +\norm{u} \norm{\Dm p}_\infty\big)
    \Big)\norm{u}^2\\
    &\le \eps\left(\norm{\Dp u \Dp p}^2 + \norm{\Dp u \Dm p}^2\right) 
    + A(\eps,\norm{u})\\
    &\le \eps \left(\left(\left(\Dp u\right)^2, \Dp p\right)
    +\left(\left(\Dp u\right)^2,\Dm p\right)\right) + A(\eps,\norm{u_0})
  \end{align*}
  where the locally bounded function $A$ now depends on the first and second
  derivatives of $p$. Recall that $\norm{u}\le \norm{u_0}$, cf.~\eqref{eq:L2}.  Hence, 
  \begin{equation}\label{eq:wbndnew}
    \begin{aligned}
      \norm{w}_p^2 +\delta\frac{\Dx^2}{2}\norm{Du}_p^2 &\le
      \norm{u}_p^2 + A(\eps, \norm{u_0}) \Dt \\
      &\qquad + \eps \Dt \left(\left(\left(\Dp u\right)^2, \Dp
          p\right) +\left(\left(\Dp u\right)^2,\Dm p\right)\right)\\
  &\qquad + \frac{\Dx^2}{2}\left(u^2,\Dp\Dm p\right).
    \end{aligned}
  \end{equation}
  Next we study the full difference scheme by adding the ``Airy term''
  $\Dt\, \Dp^2\Dm u^{n+1}_j$. Thus the full difference scheme
  \eqref{eq:sjobergexp1} can be written
  \begin{equation*}
    v=w-\Dt\,\Dp^2\Dm v.
  \end{equation*}
  Writing this as $w=v+\Dt\,\Dp^2\Dm v$, we square it, multiply by $p$
  and sum over $j$ to get
  \begin{align*}
    \norm{w}_p^2&=\norm{v}_p^2+2\Dt \left(v,\Dp^2\Dm v\right)_p+ \Dt^2
    \norm{\Dp^2\Dm v}_p^2
    \\
    &=\norm{v}_p^2+\Dt^2\norm{\Dp^2 \Dm v}_p^2
    \\
    &\quad + 2\Dt\left( \left(\Dp v\right)^2, \Dm p\right) + \Dt
    \left(\left(\Dp v\right)^2,\Dp p\right)
    \\
    &\qquad \qquad + \Dt\Dx \norm{\Dp^2 v}_{p}^2 - \Dt
    \left(v^2 ,\Dp\Dm^2 p\right).
  \end{align*}
  Combining this with \eqref{eq:wbndnew} we get
  \begin{align*}
    \norm{v}_p^2 &+ \Dt \left(\left(\Dp v\right)^2,\Dp p\right)  +
    2\Dt\left(\left(\Dp v\right)^2,\Dm p\right)\\
    &  + \Dt^2\norm{\Dp^2 \Dm v}_p^2 + \delta\frac{\Dx^2}{2}
    \norm{Du}^2_p +
    \Dt\Dx \norm{\Dp^2 v}_{p}^2\\
    &\quad \le 
    \norm{u}_p^2 + \eps\Dt \left(\left(\Dp u\right)^2,\Dm p\right)
    + \eps\Dt  \left(\left(\Dp u\right)^2,\Dp p\right) \\
    &\qquad\qquad 
    + \Dt A(\eps,\norm{u_0}) + \Dt \left(v^2,\Dp\Dm^2 p\right).
  \end{align*}
  Rearranging and dropping some terms ``with the right sign'' we
  obtain
  \begin{align*}
    \norm{v}_p^2 + \Dt(2-\eps) &\left(\left(\Dp v\right)^2,\Dm p\right) 
    + \Dt(1-\eps)\left(\left(\Dp v\right)^2,\Dp p\right) \\
    & \le
    \norm{u}_p^2 + 2\Dt\,\eps\left(\left(\Dp u\right)^2-\left(\Dp
        v\right)^2, D p\right) \\
    &\qquad  +
    \Dt\left(A(\eps,\norm{u_0})+\frac12 \norm{u_0}^2 + \frac12
      \norm{\Dp\Dm^2p}^2\right).  
  \end{align*}
  Next, observe that 
  \begin{equation*}
    \left(\left(\Dp v\right)^2,D_\pm p\right) \ge 
    \Dx\!\!\!\!\!\!\sum_{\abs{j\Dx}\le R-1} \left(\Dp v_j\right)^2 \ge 0.
  \end{equation*}
  Define the locally bounded function $B$ by
  $B(\eps,z)=A(\eps,z)+\frac12z^2+\norm{\Dp\Dm^2p}^2$. We choose $\eps=1/2$
  and recall that $v=u^{n+1}$ and $u=u^n$. Then we get
  \begin{equation}
    \label{eq:telescope1}
    \begin{aligned}
      \norm{u^{n+1}}_{p}^2 + &2\Dt \Dx\!\!\!\!\sum_{\abs{j\Dx}\le R-1}
      \left(\Dp
        u_j^{n+1}\right)^2 \\
      &\le \norm{u^n}_p^2 + \Dt \left(\left(\Dp u^n\right)^2-\left(\Dp
          u^{n+1}\right)^2, D p\right) + \Dt B(\eps,\norm{u_0}).
    \end{aligned}
  \end{equation}
  This is a telescoping sum, and we choose $N$ such that $N\Dt = T$ to
  find
  \begin{equation}
    \label{eq:ubound1*}
    \begin{aligned}
      \norm{u^{N}}_{p}^2 + &\Dt \Dx\sum_{n=0}^{N-1}\sum_{\abs{j\Dx}\le R-1}
      \left(\Dp
        u_j^{n+1}\right)^2 \\
      &\le \norm{u^0}_p^2 + \Dt \left(\left(\Dp u^0\right)^2-\left(\Dp
          u^{N}\right)^2, D p\right) + T B(\eps,\norm{u_0}).
    \end{aligned}
  \end{equation}
  From this we can easily conclude the proof of the lemma.
\end{proof}

\begin{theorem}
  \label{theo:main_fully}
  Let $ \seq{u^n_j}$ be a sequence defined by the numerical scheme
  \eqref{eq:sjobergexp1}, and assume that there is a constant $K$ such
  that $\Dt = K\Dx^2$. Assume furthermore that $\norm{u_0}_{L^2(\R)}$
  is finite, then there exist constants $C_1$, $C_2$, and $C_3$ such
  that
  \begin{align}
     \norm{u_\Dx}_{L^{\infty}(0,T; L^2(-Q,Q))} &\le C_1, \label{eq:ubound1}\\
     \norm{u_\Dx}_{L^{2}(0,T; H^1(-Q,Q))} &\le C_2, \label{eq:ubound2}\\
     \norm{\partial_t u_{\Dx}}_{L^{{4}/{3}}(0,T; H^{-2}(-Q,Q))} &\le
     C_3,\label{eq:ubound3}
  \end{align}
  where $Q=R-1$ and $u_\Dx$ is defined by bilinear interpolation from
  $\seq{u^n_j}$, cf.~\eqref{eq:bilinearinterp}.
  Moreover, there exists a sequence of $\seq{\Dx_j}_{j=1}^\infty$ with
  $\lim_j \Dx_j =0$, and a function $u\in L^2(0,T; L^2(-Q,Q))$
  such that
  \begin{equation}\label{eq:uconverges}
     \text{$u_{\Dx_j} \to  u$ strongly in $ L^2(0,T;L^2(-Q,Q))$},
   \end{equation}
  as $j$ goes to infinity.  The function  $u$ is a weak solution of
  \eqref{eq:kdv0}.
\end{theorem}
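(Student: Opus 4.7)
The plan is to establish the three a priori bounds \eqref{eq:ubound1}--\eqref{eq:ubound3}, then invoke the Aubin--Simon compactness lemma to extract a strongly convergent subsequence, and finally carry out a Lax--Wendroff-style consistency argument identifying the limit as a weak solution of \eqref{eq:kdv0}.

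Bound \eqref{eq:ubound1} will follow immediately from the global $L^2$ stability in Lemma \ref{lem:stability1}, which passes to the bilinear interpolant $u_\Dx$ as $\norm{u_\Dx(\dott,t)}_{L^2(\R)} \le \norm{u_0}_{L^2}$. Bound \eqref{eq:ubound2} then combines this $L^\infty_t L^2$ estimate with the Kato-type local smoothing bound \eqref{eq:uderivRbound} of Lemma \ref{lem:h1stability}, which gives $\partial_x u_\Dx \in L^2((0,T)\times(-Q,Q))$.

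The main work lies in \eqref{eq:ubound3}. Using the identities $\ave{u}-u=\frac{\Dx^2}{2}\Dp\Dm u$ and $\ave{u}Du=D(u^2/2)$, the scheme \eqref{eq:sjobergexp1} can be rewritten as
\begin{equation*}
  \Dtp u^n = \frac{1}{2K}\Dp\Dm u^n - D\!\left((u^n)^2/2\right) - \Dp^2\Dm u^{n+1}.
\end{equation*}
I would test this identity against $\phi\in H^2_0(-Q,Q)$ and move spatial derivatives onto $\phi$ by summation by parts. The diffusion and flux contributions yield $|(u^n,\Dp\Dm\phi)| + |((u^n)^2/2,D\phi)| \le C(1+\norm{u_0})\norm{u_0}\,\norm{\phi}_{H^2}$ (using the Sobolev embedding $H^2\hookrightarrow W^{1,\infty}$ in one dimension), placing both in $L^\infty_t H^{-2}_{x,\mathrm{loc}}$. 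The Airy term is more delicate, since $\Dp^2\Dm u^{n+1}$ is nominally a third-order operator; the key trick is the factorization $\Dp^2\Dm=\Dp\Dm\cdot \Dp$ combined with the self-adjointness of $\Dp\Dm$ in $\ell^2$, giving
\begin{equation*}
(\Dp^2\Dm u^{n+1},\phi) = (\Dp u^{n+1},\Dp\Dm\phi) \le \norm{\Dp u^{n+1}}_{L^2_{\mathrm{loc}}}\,\norm{\Dp\Dm\phi}_{\ell^2}.
\end{equation*}
By \eqref{eq:uderivRbound} the right-hand side lies in $L^2_t$, so altogether $\partial_t u_\Dx\in L^2(0,T;H^{-2}(-Q,Q))\hookrightarrow L^{4/3}(0,T;H^{-2}(-Q,Q))$.

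With the bounds in hand, the Aubin--Simon lemma (Lemma \ref{lemma:simon}) applied to the chain $H^1(-Q,Q) \subset\subset L^2(-Q,Q) \subset H^{-2}(-Q,Q)$ (first embedding compact by Rellich--Kondrachov) will deliver a subsequence $u_{\Dx_j}\to u$ strongly in $L^2(0,T;L^2(-Q,Q))$; a diagonal procedure over $Q\to\infty$ then yields a single $L^2_{\mathrm{loc}}$ limit. To close, I would test the scheme against $\phi\in C_c^\infty(\R\times[0,T))$, sum by parts in both time and space, and pass to the limit: the quadratic flux closes thanks to strong $L^2_{t,x,\mathrm{loc}}$ convergence of $u_{\Dx_j}$, while the linear dispersive term, the initial data, and the time-derivative term pass by weak convergence combined with standard consistency of the discrete difference operators against smooth test functions. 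The main obstacle is the Airy control in \eqref{eq:ubound3}---the factorization $\Dp^2\Dm=\Dp\Dm\Dp$ is precisely what lets the local Kato estimate (the only second-order-like information available in the $L^2$ setting) substitute for an unavailable discrete $H^2$ bound on $u$.
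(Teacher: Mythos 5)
Your proposal is correct and shares the paper's overall architecture (the three a priori bounds, Aubin--Simon, then a Lax--Wendroff identification of the limit); bounds \eqref{eq:ubound1} and \eqref{eq:ubound2} are obtained exactly as in the paper, from Lemma \ref{lem:stability1} and the Kato-type estimate \eqref{eq:uderivRbound}. Where you genuinely diverge is in the time-derivative bound. The paper keeps the convective term $\ave{u}^nDu^n$ as an $L^2_x$ object, which forces it to prove the discrete interpolation bound $\norm{\eta u^n}_\infty\le 2\norm{\eta u^n}^{1/2}\norm{\Dp(\eta u^n)}^{1/2}$ and hence an $L^{4}_tL^\infty_x$ estimate on $u^n$; this is the sole source of the exponent $4/3$ in time, and the Airy term is handled by two summations by parts landing in $H^{-3}$ (so the paper in fact proves an $H^{-3}$ bound, slightly weaker than the $H^{-2}$ announced in \eqref{eq:ubound3}). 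You instead put one extra derivative on the test function in the convective term as well, via $\ave{u}Du=\frac12 D(u^2)$ and $\abs{((u^n)^2,D\phi)}\le\norm{u_0}^2\norm{\phi'}_\infty$, and use the factorization $\Dp^2\Dm=\Dp\Dm\Dp$ with self-adjointness of $\Dp\Dm$ for the Airy term; this places all three pieces of $\Dtp u^n$ in $H^{-2}$ with the Airy contribution in $L^2_t$ and the other two in $L^\infty_t$, so you get $\partial_tu_\Dx\in L^2(0,T;H^{-2})$, which is both stronger than needed and avoids the Gagliardo--Nirenberg step entirely. This is a legitimate simplification. Two details you should spell out to make it airtight: (1) the pairing of the piecewise-constant grid function with $\phi\in H^2_0(-Q,Q)$ really involves the cell averages $\Phi_j=\frac1\Dx\int_{x_j}^{x_{j+1}}\phi$, so after summation by parts you must verify $\norm{\Dp\Dm\Phi}_{\ell^2}\le C\norm{\phi''}_{L^2}$; this follows from the kernel representation $\phi(x+\Dx)-2\phi(x)+\phi(x-\Dx)=\int_{-\Dx}^{\Dx}(\Dx-\abs{t})\phi''(x+t)\,dt$ and Cauchy--Schwarz, and is exactly the point where the paper, comparing $\Dp\Dm\Phi$ to $\phi''$ pointwise instead, pays with a $\phi'''$ remainder and drops to $H^{-3}$; (2) the index shifts in $\Dp\Dm\Phi$ mean the Kato estimate must be invoked on a slightly enlarged interval, which is harmless since $R$ in Lemma \ref{lem:h1stability} is arbitrary. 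Neither point is a gap, and the rest of your argument (Rellich for the compact embedding, diagonalization in $Q$, strong $L^2_{t,x,\mathrm{loc}}$ convergence to pass to the limit in the quadratic flux) matches the paper.
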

\begin{proof}
  We first observe that $\norm{u_\Dx}\le \norm{u_0}$ so that
  \eqref{eq:ubound1} holds. To that end we first recall \eqref{eq:L2} which in particular implies that $\norm{u^{n+1}}\le \norm{u^{n}}$.  Write now
  \begin{equation*}
u_\Dx=  w_j+\frac{x-x_j}{\Dx}(w_{j+1}-w_j), \quad (x,t)\in [x_j,x_{j+1})\times [t_n,t_{n+1})
  \end{equation*}
where $w_j= u^n_j+(t-t_n)\Dtp u^n_j$. This implies
  \begin{equation*}
  \begin{aligned}
\int\abs{u_\Dx}^2\, dx&=\sum_j \int_{x_j}^{x_{j+1}} \abs{w_j+\frac{x-x_j}{\Dx}(w_{j+1}-w_j)}^2 dx \\
&=\Dx\sum_j\big(w_j^2+\frac13(w_{j+1}-w_j)^2+w_j(w_{j+1}-w_j)\big)\\
&= \frac23\norm{w}^2+\frac{\Dx}{3} \sum_j w_{j+1}w_j\\
&\le \norm{w}^2\\
&\le \norm{u^n}^2.
\end{aligned}
  \end{equation*}
The conclusion follows. 
  
  To show \eqref{eq:ubound2} we calculate that for $(x,t)\in
  [x_j,x_{j+1})\times [t_n,t_{n+1})$
  \begin{align*}
    \partial_x u_\Dx &=\Dp u^n_j + (t-t_n)\Dtp \Dp u^n_j\\
    &=\alpha_n(t) \Dp u^n_n + \left(1-\alpha_n(t)\right)\Dp u^{n+1}_j,
  \end{align*}
  where $\alpha_n(t)=(t-t_n)/\Dt\in [0,1)$. Using this, we find
  \begin{align*}
    \norm{\partial_x u_\Dx}_{L^2(0,T;L^2(-Q,Q))}^2 &=
    \int_0^T \norm{\partial_x u_\Dx(\dott,t)}_{L^2(-Q,Q)}^2 \,dt\\
    &\le 2 \sum_n\Dx \sum_{\abs{j\Dx}\le Q} \left(\Dp u^n_j\right)^2
    \frac{1}{\Dt^2}\int_{t_n}^{t_{n+1}} (t-t_n)^2\,dt \\
    &\qquad \quad \qquad + \left(\Dp u^{n+1}_j\right)^2
    \frac{1}{\Dt^2}\int_{t_n}^{t_{n+1}}
    (t_{n+1}-t)^2\,dt\\
    &\le \frac{2}{3}\Dt \sum_n \Dx\sum_{\abs{j\Dx}\le Q} \big(\left(\Dp
      u^n_j\right)^2 + \left(\Dp
      u^{n+1}_j\right)^2\big)\\
    &\le C_R,
  \end{align*}
  by Lemma~\ref{lem:h1stability}. This, and the fact that
  $\norm{u_\Dx(\dott,t)}_{L^2(-Q,Q)}\le \norm{u_0}$, proves
  \eqref{eq:ubound2}.

  Next, observe that in each cell $[x_j,x_{j+1})\times [t_n,t_{n+1})$
  \begin{equation}\label{eq:timederiv2}
    \partial_t u_{\Dx} = \Dtp u^n_j + (x -x_j) \Dp \Dtp u^n_j,
  \end{equation}
  and from the scheme we have
  \begin{equation}\label{eq:timederiv1}
    \Dtp u^n_j = \frac{\Dx^2}{2\Dt} \Dp \Dm u^n_j- \bar{u}_j^{n} D
    u_j^{n} - \Dm \Dp^{2} u_j^{n+1}. 
  \end{equation}
  We claim that for all sufficiently small $\Dx$ (actually for
  $\Dx<1/3$):
  \begin{itemize}
  \item [\bf{(a)}] For all $n\in \N_0$,
    \begin{equation*}
      \norm{\Dm \Dp^2 u^{n}}_{H^{-3}(-Q,Q)} \le C\norm {\Dp u^n}_{L^2(-Q,Q)}.
    \end{equation*}

  \item [\bf{(b)}] For all $n\in\N_0$,
    \begin{equation*}
      \norm{\Dp \Dm u^n}_{H^{-2}(-Q,Q)} \le C\norm{\Dp u^n}_{L^2(-Q,Q)}.
    \end{equation*}

  \item [\bf{(c)}] The piecewise constant function $\bar{u}_j^{n} D
    u_j^{n}$ satisfies
    \begin{equation*}
      \norm{\bar{u} D u}_{L^{4/3}(0,T,L^2(-Q,Q))} \le C,
    \end{equation*}
    for some constant which only depends on $Q$, $T$ and $u_0$.
  \end{itemize}
  To prove the first part of the claim, let $\phi \in H^3_0(-Q,Q)$ be
  any test function
  \begin{align*}
    \Bigl| \int_{-Q}^Q \left(\Dm\Dp^2 u^n\right) \phi(x)\,dx\Bigr|&=
    \Bigl|\sum_{\abs{j\Dx}\le Q} \Dm\Dp^2 u^n_j \int_{x_j}^{x_{j+1}}
    \phi(x)\,dx\Bigr|\\
    &=\Bigl|\sum_{\abs{j\Dx}\le Q} \Dp u^n_j \int_{x_j}^{x_{j+1}}
    \Dp\Dm \phi(x)\,dx\Bigr|\\
    &\le \underbrace{\sum_{\abs{j\Dx}\le Q} \abs{\Dp u^n_j}
      \int_{x_{j}}^{x_{j+1}}
      \abs{\phi''(x)}\,dx}_{\romnum{1}}\\
    &\qquad + \underbrace{\sum_{\abs{j\Dx}\le Q} \abs{\Dp u^n_j}
      \int_{x_j}^{x_{j+1}} \abs{\Dp\Dm \phi(x) - \phi''(x)}\,dx}_{\romnum{2}}.
  \end{align*}
  We start by estimating $\romnum{2}$, to that end
  \begin{align*}
    \int_{x_j}^{x_{j+1}} \abs{\Dp\Dm \phi(x) - \phi''(x)}\,dx & \le
    \frac{1}{\Dx^2}\int_{x_j}^{x_{j+1}} \int_x^{x+\Dx}\int_{z-\Dx}^{z}
    \int_x^\tau \abs{\phi'''(\theta)}\,d\theta\,d\tau\,dz\,dx\\
    &\le \frac{1}{\Dx^2}\int_{x_j}^{x_{j+1}}
    \int_x^{x+\Dx}\int_{z-\Dx}^{z}
    \sqrt{\tau-x}\norm{\phi'''}_{L^2(x,\tau)}\,d\tau\,dz \,dx\\
    &\le \frac43\Dx^{3/2}\norm{\phi'''}_{L^2(x_{j-1,x_{j+2})}}.
  \end{align*}
  Thus
  \begin{align*}
    \romnum{2}&\le \Dx\Bigl(\sum_{\abs{j\Dx}\le Q} 3\Dx \abs{\Dp
      u^n_j}^2\Bigr)^{1/2} \Bigl(\sum_{\abs{j\Dx}\le
      Q}\norm{\phi'''}_{L^2(x_{j-1},x_{j+2})}^2\Bigr)^{1/2}\\
    &\le 3\Dx \norm{\Dp u^n}_{L^2(-Q,Q)} \norm{\phi'''}_{L^2(-Q,Q)}.
  \end{align*}
  As to $\romnum{1}$, we calculate
  \begin{align*}
    \romnum{1}&=\sum_{\abs{j\Dx}\le Q} \abs{\Dp u^n_j} \int_{x_{j}}^{x_{j+1}}
    \abs{\phi''(x)}\,dx\\
    &\le \sum_{\abs{j\Dx}\le Q} \abs{\Dp u^n_j}
    \sqrt{\Dx}\norm{\phi''}_{L^2(x_j,x_{j+1})} \\
    &\le \Bigl(\sum_{\abs{j}\le Q} \Dx \abs{\Dp u^n_j}^2\Bigr)^{1/2}
    \Bigl(\sum_{\abs{j\Dx}\le
      Q}\norm{\phi''}_{L^2(x_j,x_{j+1})}^2\Bigr)^{1/2}\\
    &=\norm{\Dp u^n}_{L^2(-Q,Q)} \norm{\phi''}_{L^2(-Q,Q)}.
  \end{align*}
  Therefore {\bf (a)} follows.  Claim {\bf (b)} is proved similarly.

  To prove {\bf (c)} we first define the cut-off function $\eta$ as
  \begin{equation*}
    \eta(x) =
    \begin{cases}
      1 &\abs{x}\le Q,\\
      0 &\abs{x}\ge Q+1,\\
      x+Q+1 & x\in [-(Q+1),-Q],\\
      Q+1-x & x\in [Q,Q+1],
    \end{cases}
  \end{equation*}
  and set $\eta_j=\eta(x_j)$. Then we have that
  \begin{align*}
    \Dt \sum_{n=0}^{N-1}\Bigl(\sum_j \abs{\eta_j \bar{u}^n_j D
      u^n_j}^2 \Bigr)^{2/3} &\le \Dt \sum_{n=0}^{N-1} \norm{\eta
      u^n}_\infty^{4/3} \Bigl(
    \sum_{\abs{j\Dx}\le R} \left(D u^n_j\right)^2\Bigr)^{2/3}\\
    &\le \Bigl(\Dt \sum_{n=0}^{N-1} \norm{\eta u^n}_\infty^4\Bigr)^{1/3}
    \Bigl(\Dt\Dx\sum_{n=0}^{N-1}\sum_{\abs{j\Dx}\le R} \left(D
      u^n_j\right)^2\Bigr)^{2/3}\\
    &\le \Bigl(\Dt \sum_{n=0}^{N-1} \norm{\eta u^n}_\infty^4\Bigr)^{1/3}
    \Bigl(\Dt\Dx\sum_{n=0}^{N-1}\sum_{\abs{j\Dx}\le R} \left(\Dp
      u^n_j\right)^2\Bigr)^{2/3}\\
    &\le \Bigl(\Dt \sum_{n=0}^{N-1} \norm{\eta u^n}_\infty^4\Bigr)^{1/3}
    C_R^{2/3},
  \end{align*}
  by Lemma~\ref{lem:h1stability}. To proceed we use the inequality
  \begin{equation*}
    \norm{v}_\infty \le 2 \Bigl(\Dx\sum_{\abs{j\Dx}\le R} v_j^2\Bigr)^{1/4}
    \Bigl(\Dx\sum_{\abs{j\Dx}\le R} \left(\Dp v_j\right)^2\Bigr)^{1/4},
  \end{equation*}
  which holds for any grid function $v$ such that $v_j=0$ for
  $\abs{j\Dx}\ge R$. This can be shown as follows:
 \begin{equation*}
 \begin{aligned}
 v_j^2&=\sum_{k=-\infty}^{j-1}(v_{k+1}^2-v_{k}^2)
 =\Dx\sum_{k=-\infty}^{j-1}(v_k+v_{k+1})\frac{v_{k+1}-v_{k}}{\Dx} \\
 &\le\Big(\Dx \sum_{k=-\infty}^{j-1}(v_k+v_{k-1})^2\Big)^{1/2} 
 \Big(\Dx \sum_{k=-\infty}^{j-1}(\Dp v_k)^2\Big)^{1/2} \\
&\le \sqrt{2} \big(\norm{v}^2+\norm{v}^2\big)^{1/2} \norm{\Dp v}\\
&\le 2\norm{v}\norm{\Dp v},
 \end{aligned}
 \end{equation*} 
 which implies that
 \begin{equation*} 
 \norm{v}_\infty\le 2\norm{v}^{1/2}\norm{\Dp v}^{1/2}.
 \end{equation*}   
  
  We shall use this for $v=\eta u^n$, to that end
  observe
  \begin{align*}
    \Dx\sum_{\abs{j\Dx}\le R} \left(\eta_j u^n_j\right)^2 &\le
    \norm{u^n}^2,\\
    \Dx \sum_{\abs{j\Dx}\le R} \left(\Dp \eta_j u^n_j\right)^2 &\le
    2\Dx \sum_{\abs{j\Dx}\le R}\Big( \left(u^n_j\Dp \eta_j\right)^2 +
    \left(\eta_{j+1}\Dp u^n_j\right)^2\Big)\\
    &\le 2 \norm{u^n}^2 + 2\Dx \sum_{\abs{j\Dx}\le R} \left(\Dp
      u^n_j\right)^2.
  \end{align*}
  Hence
  \begin{equation*}
    \norm{\eta u^n}_\infty^4 \le C\norm{u^n}^2\Bigl(\norm{u^n}^2 +
    \Dx\sum_{\abs{j\Dx}\le R} \left(\Dp u^n_j\right)^2 \Bigr).
  \end{equation*}
  Thus
  \begin{align*}
    \Dt \sum_{n=0}^{N-1}\Bigl(\Dx\sum_{\abs{j\Dx}\le Q} \abs{\bar{u}^n_j
      D u^n_j}^2 \Bigr)^{2/3} &\le \Dt \sum_{n=0}^{N-1}\Bigl(\Dx\sum_j
    \abs{\eta_j \bar{u}^n_j D u^n_j}^2
    \Bigr)^{2/3} \\
    &\le C_R\biggl(\Dt\sum_n\norm{u^n}^2\Bigl(\norm{u^n}^2 +
    \Dx\!\!\!\sum_{\abs{j\Dx}\le R}\!\!\! \left(\Dp u^n_j\right)^2
    \Bigr)\biggr)^{1/3} \\
    &\le C,
  \end{align*}
  for a constant $C$ depending only on $\norm{u_0}$, $R$, and $T$. This
  proves {\bf (c)}.

  Now {\bf (a)}, {\bf (b)} and Lemma~\ref{lem:h1stability} mean that
  \begin{align*}
    \norm{\Dm\Dp^2 u^n}_{L^{4/3}(0,T,H^{-3}(-Q,Q))}^{4/3} &=
    \Dt \sum_{n=0}^N \norm{\Dm\Dp^2 u^n}_{H^{-3}(-Q,Q)}^{4/3} \\
    &\le CT^{1/3} \Bigl(\Dt\sum_{n=0}^N \norm{\Dp
      u^n}_{L^2(-Q,Q)}^2\Bigr)^{2/3}\\
    &\le C,
  \end{align*}
  and that
  \begin{equation*}
    \norm{\Dm\Dp u^n}_{L^{4/3}(0,T,H^{-3}(-Q,Q))}^{4/3} \le C.
  \end{equation*}
  Similarly, $\bar{u}^nDu^n\in L^{4/3}(0,T,L^2(-Q,Q))\subset
  L^{4/3}(0,T,H^{-3}(-Q,Q))$. Therefore, by \eqref{eq:timederiv1},
  $\Dtp u^n_j\in L^{4/3}(0,T,H^{-3}(-Q,Q))$.  Next, let
  \begin{equation*}
    \alpha(x)=\frac{1}{\Dx}\sum_j(x-x_j)\chi_{[x_j,x_{j+1})}(x).
  \end{equation*}
  Then \eqref{eq:timederiv2} reads
  \begin{equation*}
    \partial_t u_{\Dx} = \alpha \Dtp u^n + (1-\alpha)\Dtp S^+u^n.
  \end{equation*}
  Therefore
  \begin{equation*}
    \begin{aligned}
      \norm{\partial_t u_\Dx}_{L^{4/3}(0,T,H^{-3}(-Q,Q))} &\le
      \norm{\alpha}_{L^\infty(\R)} \norm{\Dtp
        u^n}_{L^{4/3}(0,T,H^{-3}(-Q,Q))}\\
      &\qquad + \norm{1-\alpha}_{L^\infty(\R)} \norm{\Dtp
        S^+u^n}_{L^{4/3}(0,T,H^{-3}(-Q,Q))}\\
      &\le 2 \norm{\Dtp u^n}_{L^{4/3}(0,T,H^{-3}(-Q,Q))} \le C,
    \end{aligned}
  \end{equation*}
  which is \eqref{eq:ubound3}.

  Using \eqref{eq:ubound1}, \eqref{eq:ubound2}, and \eqref{eq:ubound3}
  we can apply the Aubin--Simon compactness lemma (see Lemma  \ref{lemma:simon}) to conclude that
  $u_\Dx$ has a  subsequence which convergences strongly in
  $L^2(0,T;L^2(-Q,Q))$,
  i.e., \eqref{eq:uconverges} holds.

  Note that this is enough to pass to the limit in the
  nonlinearity. This means we can apply the Lax--Wendroff like result
  of \cite{HoldenKarlsenRisebro:1999} to conclude that the limit is a weak
  solution.
\end{proof}

\begin{lemma}[Aubin--Simon]
  \label{lemma:simon}
  Let $X, B, Y$ are three Banach spaces such that $X \subset B$ with
  compact embedding and $B \subset Y$ with continuous embedding. Let
  $T > 0$ and $\{u_n\}_{n \in \N}$ be a sequence such that $\{u_n\}_{n \in
    \N}$ is bounded in $L^p(0,T;X)$ and $\{\partial_t u_n\}_{n\in \N}$
  is bounded in $L^{q}(0,T;Y)$, for any $1 \le p,\,q \le \infty$.
  Then there exists $u \in L^p(0,T;B)$ such that, up to a subsequence,
  \begin{equation*}
    u_n \rightarrow u \quad \text {in} \quad L^p(0,T;B).
  \end{equation*}
\end{lemma}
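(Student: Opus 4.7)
The plan is to apply the classical Fréchet-Kolmogorov-Riesz compactness criterion in the Bochner space $L^p(0,T;B)$. That criterion says a bounded subset of $L^p(0,T;B)$ is relatively compact as soon as its time-translates vanish in $L^p(0,T;B)$ uniformly as the shift parameter tends to zero. Uniform $L^p(0,T;B)$ boundedness of $\{u_n\}_{n\in\N}$ follows immediately from the hypothesis that $\{u_n\}$ is bounded in $L^p(0,T;X)$ combined with the continuous embedding $X \subset B$ (implied by the compactness assumption). So essentially all the work is in the translation estimate.

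The central tool I would deploy is an Ehrling-type interpolation inequality: for every $\eta > 0$ there exists $C_\eta$ such that
\[
\norm{v}_B \le \eta\norm{v}_X + C_\eta\norm{v}_Y \quad \text{for all } v \in X.
\]
This I would establish by contradiction: if it failed, one could extract a sequence $\{v_k\}$ bounded in $X$ with $\norm{v_k}_Y \to 0$ but $\norm{v_k}_B = 1$. The compact embedding $X \subset B$ would give a $B$-convergent subsequence with limit $v$ satisfying $\norm{v}_B = 1$, while continuity of $B \subset Y$ forces $v = 0$ in $Y$ and hence in $B$, a contradiction.

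Applying the interpolation inequality pointwise in $t$ to $u_n(t+h) - u_n(t)$ and integrating, one obtains
\[
\norm{u_n(\cdot + h) - u_n}_{L^p(0,T-h;B)} \le 2\eta\, \norm{u_n}_{L^p(0,T;X)} + C_\eta\, \norm{u_n(\cdot + h) - u_n}_{L^p(0,T-h;Y)}.
\]
The last term is controlled by writing $u_n(t+h) - u_n(t) = \int_t^{t+h}\partial_s u_n(s)\,ds$ and using Hölder twice: first in $s$ to get $\norm{u_n(t+h) - u_n(t)}_Y \le h^{1-1/q}\norm{\partial_s u_n}_{L^q(t,t+h;Y)}$, then in $t$ with the $L^q(0,T;Y)$ bound on $\partial_t u_n$, producing a quantity of order $h^\gamma$ for some $\gamma > 0$ uniformly in $n$. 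Choosing $\eta$ first small and then $h$ small makes the translates vanish uniformly, completing the compactness argument.

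The main technical subtlety is the correct coupling between $\eta$ and $h$, together with the handling of endpoint effects near $t = T$—one either restricts the translation integral to $[0,T-h]$ or extends $u_n$ outside $[0,T]$ by zero or reflection. One also uses that $\partial_t u_n \in L^q(0,T;Y)$ implies $u_n$ is absolutely continuous as a $Y$-valued map, which justifies the fundamental theorem of calculus in $Y$. The limit cases $p = \infty$ or $q = \infty$ are accommodated by the obvious modifications to the Hölder step.
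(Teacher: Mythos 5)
The paper does not prove this lemma at all---it is quoted as a known result (Aubin--Lions--Simon) and used as a black box---so your attempt can only be judged on its own terms. Your overall architecture (Ehrling interpolation obtained by contradiction from the compact embedding, plus control of time translates through $\partial_t u_n$) is exactly the classical proof due to Simon, and the Ehrling step and the translation estimate are essentially correct (modulo the endpoint $q=1$, where $h^{1-1/q}=1$ and you must instead use a Fubini/absolute-continuity argument to make the translates small).

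There is, however, one genuine gap: the compactness criterion you invoke is false as stated. For an infinite-dimensional $B$, a subset of $L^p(0,T;B)$ that is bounded and whose time translates vanish uniformly need \emph{not} be relatively compact; take $u_n(t)\equiv e_n$ with $\seq{e_n}$ an orthonormal basis of $B=\ell^2$: the translates are identically zero, the set is bounded, yet no subsequence converges in $L^p(0,T;B)$. The Fr\'echet--Kolmogorov--Riesz/Simon criterion for $L^p(0,T;B)$ requires, in addition to the translate condition, a ``pointwise-in-time'' compactness hypothesis, namely that the averages $\int_{t_1}^{t_2}u_n(t)\,dt$ lie in a relatively compact subset of $B$ for each $0<t_1<t_2<T$. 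In your setting this extra ingredient is available---those averages are bounded in $X$ by the $L^p(0,T;X)$ bound (for $p=1$ use uniform integrability on subintervals), and $X\subset B$ compactly---but you must actually use the compact embedding a second time here; using it only inside Ehrling's lemma is not enough to close the argument. (Note that your counterexample-proof sequence $e_n$ is precisely excluded by this step, since it cannot be bounded in $X$.) With that hypothesis added and verified, the rest of your argument goes through.
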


\section{Numerical examples}\label{sec:numex}
We have tested the scheme for two examples where the solution is
known explicitly, and for one example where the solution is not known, but the
initial data has a singularity, and is in $L^2$.

\subsection{A one-soliton solution}
The KdV equation \eqref{eq:kdv0} has an exact solution given by 
\begin{equation}
  \label{eq:onesol}
  w_1(x,t)=9\left(1-\tanh^2\left(\sqrt{3/2}(x-3t)\right)\right).
\end{equation}
This represents a single bump moving to the right with speed 3. We have
tested our scheme with initial data $u_0(x)=w_1(x,-1)$ in order to check
how fast this scheme converges. In Figure~\ref{fig:1} we show the
exact solution at $t=2$ as well as the numerical solution computed using $1000$
grid points in the interval $[-10,10]$, i.e., $\Dx=20/1000$.
\begin{figure}[h]
  \centering
  \includegraphics[width=0.99\linewidth]{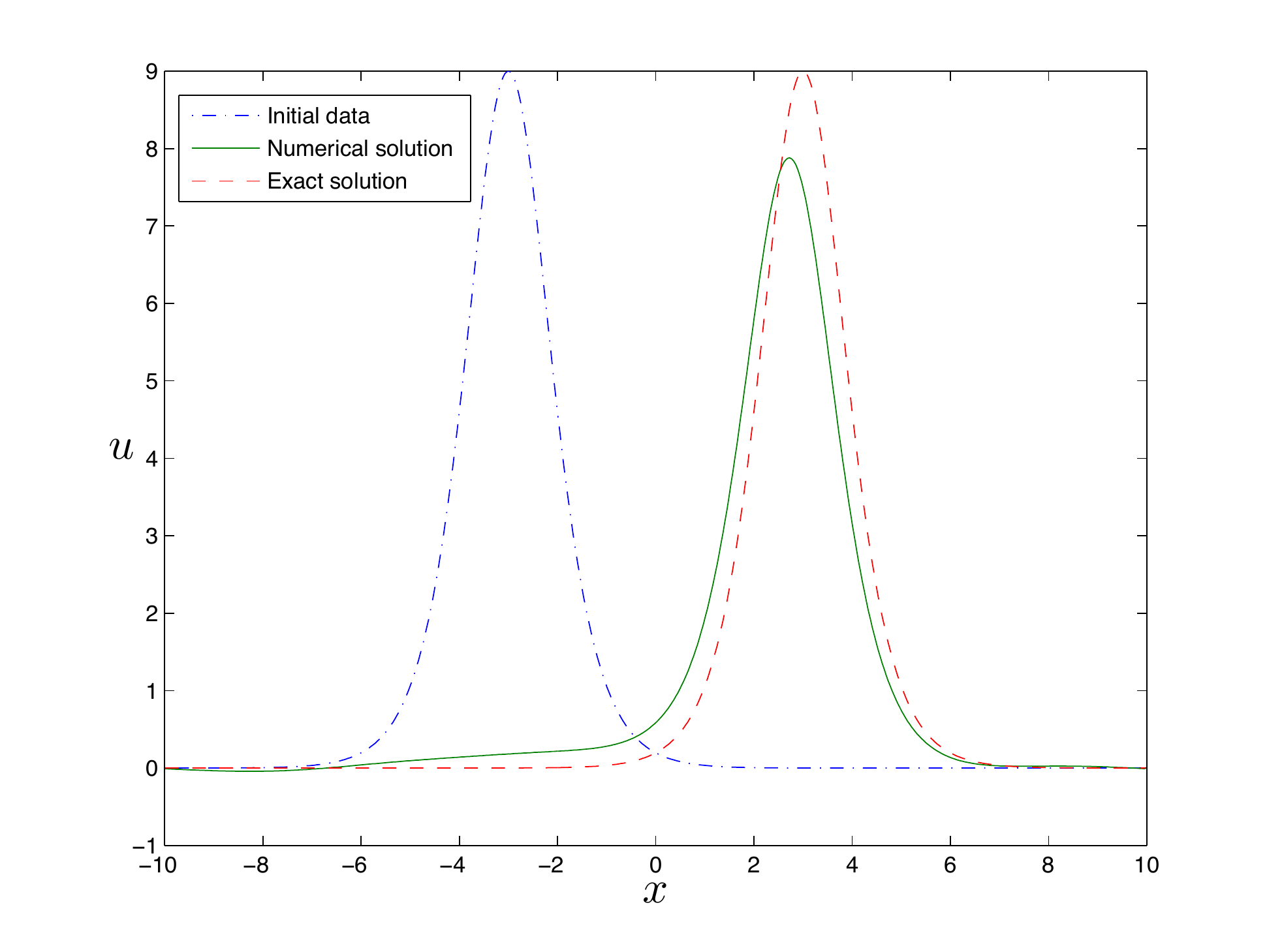}
  \caption{Initial data, and the exact and numerical solutions at
    $t=2$ with initial data $w_1(x,-1)$ with $N=1000$ grid points.}
  \label{fig:1}
\end{figure}
We have also computed numerically the error for a range of $\Dx$, where
the relative error is defined by 
\begin{equation*}
  E=100\frac{\sum_{j=1}^N \abs{w_1(x_j,1)-u_\Dx(x_j,2)}}{\sum_{j=1}^N w_1(x_j,1)}.
\end{equation*}
Recall that we are using $w_1(x,-1)$ as initial data, so that
$w_1(x,1)$ represents the solution at $t=2$.
In Table~\ref{tab:1} we show the relative errors as well as the
numerical convergence rates for this example.
\begin{table}[h]
  \centering
  \begin{tabular}[h]{c|r r}
    $N$ &\multicolumn{1}{c}{$E$} &\multicolumn{1}{c}{rate} \\
    \hline\\[-2ex]
    500  & 51.2 & \\[-1ex]
    1000 & 31.4 & \raisebox{1.5ex}{0.70} \\[-1ex]
    2000 & 17.6 & \raisebox{1.5ex}{0.83} \\[-1ex]
    4000 & 9.4  & \raisebox{1.5ex}{0.91} \\[-1ex]
    8000 & 4.9  & \raisebox{1.5ex}{0.95} \\[-1ex]
   16000 & 2.5  & \raisebox{1.5ex}{0.96}  
  \end{tabular}
  \caption{Relative errors for the one-soliton solution.}
  \label{tab:1}
\end{table}
The numerical convergence rate indicates that, as expected, the scheme
is of first order. Note also that we have to use a rather small $\Dx$
in order to get a reasonably small error. Computing soliton solutions
is quite hard, since these solutions are close to zero outside a
bounded interval, and the speed of the soliton is proportional to its
height. Therefore, if a numerical method (due to, e.g., numerical
diffusion) does not have the correct height, it will also have a wrong
speed. Thus after some time, it will be in the wrong place and the
error is close to 100\%.

\subsection{A two-soliton solution}\label{subsub:twosol}
Another exact solution of \eqref{eq:kdv0} is the so-called
two-soliton,
\begin{equation}
  \label{eq:twosoliton}
  w_2(x,t)=6(b-a)\frac{b\csch^2\left(\sqrt{b/2}(x-2bt)\right) 
    +a\sech^2\left(\sqrt{a/2}(x-2at)\right)}
  {\left(\sqrt{a}\tanh\left(\sqrt{a/2}(x-2at)\right) - 
      \sqrt{b}\coth\left(\sqrt{b/2}(x-2bt)\right)\right)^2},
\end{equation}
for any real numbers $a$ and $b$. We have used $a=0.5$ and $b=1$. This
solution represents two waves that ``collide'' at $t=0$ and separate
for $t>0$. For large $\abs{t}$, $w_2(\dott,t)$ is close to a sum of
two one-solitons at different locations. 

Computationally, this is a much harder problem than the one-soliton
solution. As initial data we have used $u_0(x)=w_2(x,-10)$. In
Figure~\ref{fig:2} we show the exact and numerical solutions at $t=20$.
\begin{figure}[h]
  \centering
  \includegraphics[width=0.99\linewidth]{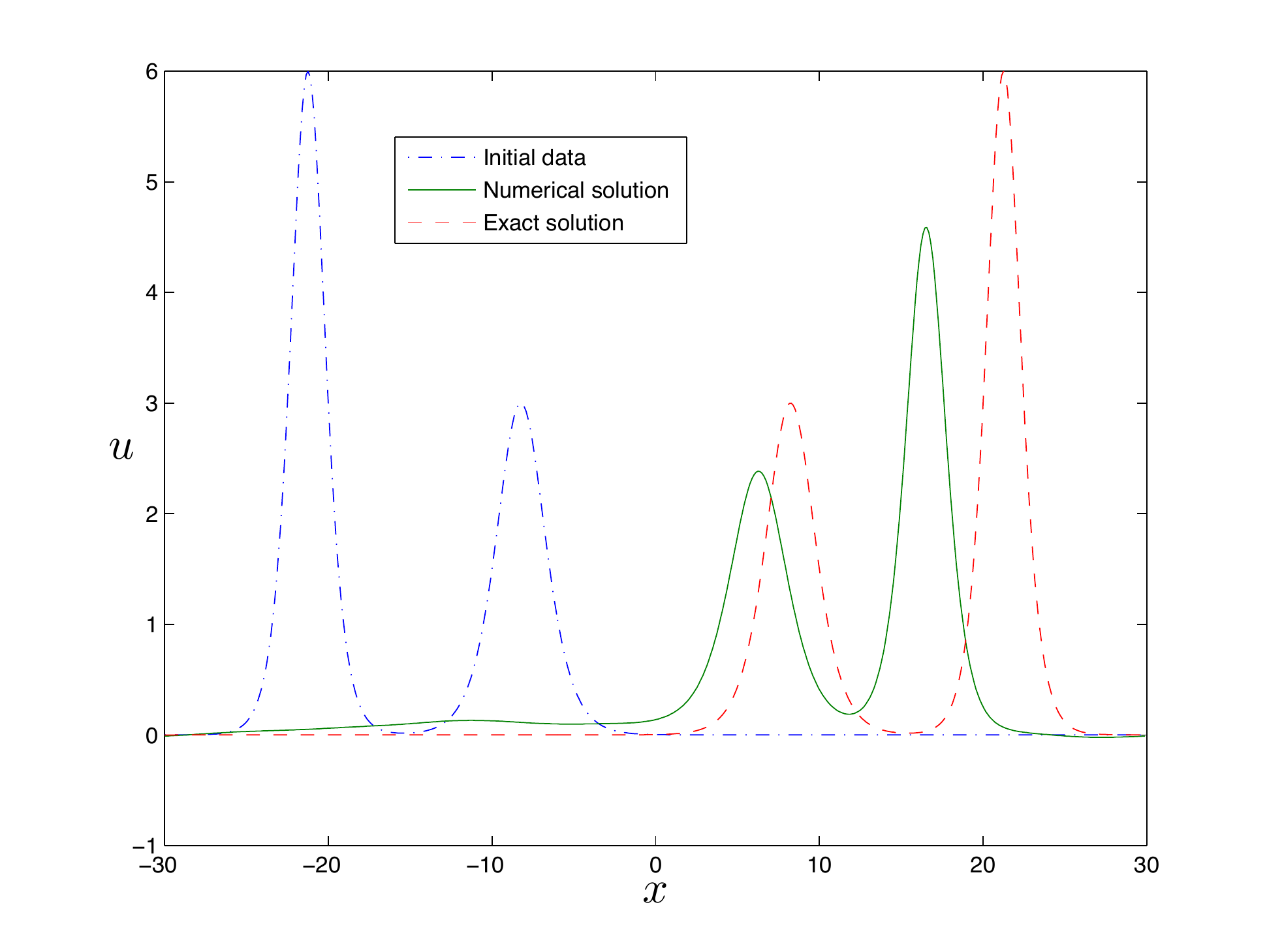}
  \caption{Initial data, and the exact and numerical solutions at
    $t=20$ with initial data $w_2(x,-10)$ with $N=4000$ grid points.}
  \label{fig:2}
\end{figure}
Although we have used 4000 grid points, the error is a staggering
140\%! We see that the qualitative features are ``right'', in the
sense that the larger soliton has overtaken the slower one, but neither
their heights nor their positions are correct. For sufficiently small
$\Dx$, the numerical solution will be close to the exact also in this
case, but it is impractical to calculate numerical convergence rates
since the computations would take too much time. 

\subsection{Initial data in $L^2$}\label{subsub:l2}
We have also tried our scheme on an example where the initial data is
in $L^2$, but not in any Sobolev space with positive
index. Furthermore, note that all the conclusions in
Section~\ref{sec:L2} remain valid if we restrict ourselves to the
periodic case. Therefore we have chosen initial data
\begin{equation}
  \label{eq:l2init}
  u_0(x)=
  \begin{cases}
    0 & x\le 0,\\
    x^{-1/3} &0<x<1,\\
    0 &x\ge 1,
  \end{cases}
\end{equation}
if $x$ is in $[-5,5]$, and extended it periodically outside this
interval. In this case we have no exact solution available. Therefore
we can only determine the convergence by viewing solutions with different
$\Dx$. In Figure~\ref{fig:3} we have plotted the numerical solutions
at $t=0.5$ using
$3750$, $7500$, $15000$ and $30000$ grid cells in the interval
$[-5,5]$. 
\begin{figure}[h]
  \centering
  \includegraphics[width=0.99\linewidth]{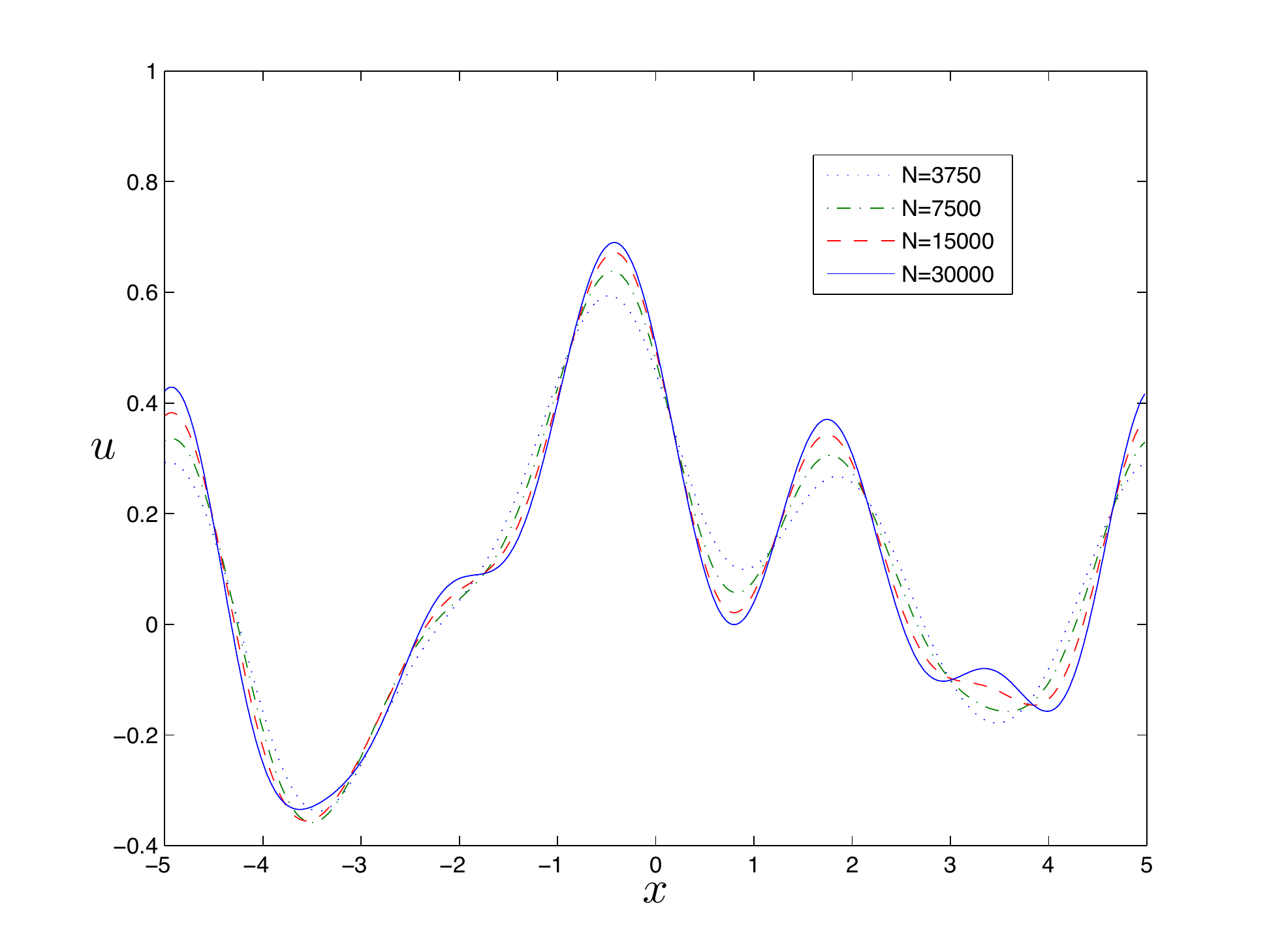}
  \caption{The numerical solution $u_\Dx(x,0.5)$ with initial data
    \eqref{eq:l2init} for various $\Dx$.}
  \label{fig:3}
\end{figure}
From this figure we can observe that the numerical solutions seem to
converge nicely to a (smooth) function. The coarser features are
already resolved using $3750$ grid cells, and only the finer
structures become more apparent for smaller $\Dx$.

\appendix
\section{Sobolev inequalities}

For the convenience of the reader we include proofs of the discrete Sobolev inequalities, that are frequently used, but rarely proved.

\begin{lemma}\label{lem:SobolevDisc} Given $m_1,m_2,m_3,n_1,n_2\in \N_0$, we define $m=m_1+m_2+m_3$ and $n=n_1+n_2$.  Assume $m<n$. Consider  $u\in \ell^2(\R)$. Given a positive $\eps$. The following estimates hold
\begin{align}
  \label{eq:sob1D}
  \norm{\Dp^{m_1}\Dm^{m_2} D^{m_3} u}_2^2&\le \eps  \norm{\Dp^{n_1}\Dm^{n_2} u}_2^2+C(\eps )\norm{u}_2^2,\\
  \label{eq:sob2aD}
  \norm{\Dp^{m_1}\Dm^{m_2} D^{m_3} u}_\infty^2&\le \eps  \norm{\Dp^{n_1}\Dm^{n_2} u}_2^2
+C(\eps )\norm{u}_2^2,
\end{align} 
for some function $C(\eps)$. 

The same estimates hold in the periodic case where $u\in\ell^\infty(\R)$ is such that there exists a period $J\in\N$ such that $u_{j+J}=u_j$ for all $j\in\Z$, and the norms are taken over the period.
\end{lemma}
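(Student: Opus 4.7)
\medskip
\noindent\textbf{Proof plan.} The plan is to reduce both estimates to pointwise inequalities on the discrete Fourier side, and then to upgrade the $\ell^2$ estimate to the $\ell^\infty$ one by means of the discrete Sobolev embedding $\norm{v}_\infty^2\le 2\norm{v}\norm{\Dp v}$ that is established within the proof of Theorem~\ref{theo:main_fully}.

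First I would introduce the Fourier symbols on $\ell^2$: writing $\hat u(\theta)=\sum_j u_j e^{-ij\theta}$, the operators $\Dp,\Dm,D$ act as multiplication by $d_+(\theta)=(e^{i\theta}-1)/\Dx$, $d_-(\theta)=(1-e^{-i\theta})/\Dx$, and $d(\theta)=i\sin\theta/\Dx$ respectively. A direct computation gives
\begin{equation*}
|d_+(\theta)|=|d_-(\theta)|=\tfrac{2}{\Dx}\left|\sin(\theta/2)\right|,\qquad |d(\theta)|=\left|\cos(\theta/2)\right|\cdot|d_+(\theta)|\le|d_+(\theta)|,
\end{equation*}
so Plancherel's identity furnishes
\begin{equation*}
\norm{\Dp^{m_1}\Dm^{m_2}D^{m_3}u}^2\le \frac{\Dx}{2\pi}\int_{-\pi}^{\pi}|d_+(\theta)|^{2m}|\hat u(\theta)|^2\,d\theta,
\end{equation*}
\begin{equation*}
\norm{\Dp^{n_1}\Dm^{n_2}u}^2= \frac{\Dx}{2\pi}\int_{-\pi}^{\pi}|d_+(\theta)|^{2n}|\hat u(\theta)|^2\,d\theta,
\end{equation*}
together with the analogous $\norm{u}^2=(\Dx/(2\pi))\int_{-\pi}^{\pi}|\hat u(\theta)|^2\,d\theta$.

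For \eqref{eq:sob1D} I would then invoke the scalar Young inequality: for $m<n$ and every $\eps>0$ there is a constant $C(\eps)$ such that $y^m\le \eps y^n+C(\eps)$ for all $y\ge 0$. Applying it pointwise with $y=|d_+(\theta)|^2$ and integrating against $|\hat u(\theta)|^2\,d\theta$ yields \eqref{eq:sob1D} immediately. For \eqref{eq:sob2aD}, I would apply the Sobolev embedding to $v=\Dp^{m_1}\Dm^{m_2}D^{m_3}u$: the factor $\norm{v}$ is controlled by \eqref{eq:sob1D}, whereas the operator producing $\Dp v$ has total order $m+1\le n$, so for $m+1<n$ the same interpolation controls $\norm{\Dp v}^2$, and in the boundary case $m+1=n$ the Fourier representation delivers $\norm{\Dp v}\le \norm{\Dp^{n_1}\Dm^{n_2}u}$ directly, without any Young step. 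A single AM--GM step with suitably small parameters then produces \eqref{eq:sob2aD}.

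The periodic case is handled identically, with the Fourier integral over $[-\pi,\pi]$ replaced by a finite sum over the $J$ discrete modes $\theta_k=2\pi k/J$; all the symbol identities and inequalities transfer without change. I do not anticipate any genuine obstacle; the only point requiring some care is the bookkeeping in the final AM--GM step so that the coefficient of $\norm{\Dp^{n_1}\Dm^{n_2}u}^2$ is genuinely an arbitrary $\eps$.
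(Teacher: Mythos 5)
Your proposal is correct, but it follows a genuinely different route from the paper. The paper works entirely in physical space: the $\ell^2$ interpolation \eqref{eq:sob1D} is proved by summation by parts, $\norm{\Dpm u}_2^2=-(u,\Dp\Dm u)\le\eps\norm{\Dp\Dm u}_2^2+C(\eps)\norm{u}_2^2$, followed by a double induction (first on $m$ with $n=m+1$, then bootstrapping to general $n>m$), and the mixed operator $D^{m_3}$ is reduced to $\Dp,\Dm$ via the binomial expansion of $D=\tfrac12(\Dp+\Dm)$. Your Fourier-multiplier argument replaces all of this with the single observation that $|d_+(\theta)|=|d_-(\theta)|$ and $|d(\theta)|\le|d_+(\theta)|$, after which Plancherel and the scalar Young inequality $y^m\le\eps y^n+C(\eps)$ give \eqref{eq:sob1D} in one line; this is cleaner and makes transparent why only the total orders $m$ and $n$ matter. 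The sup-norm estimate is obtained the same way in both arguments, via the telescoping inequality $\norm{v}_\infty^2\le 2\norm{v}\norm{\Dp v}$ plus AM--GM, and your handling of the boundary case $m+1=n$ is correct. The one place where you are too quick is the periodic case of \eqref{eq:sob2aD}: the embedding $\norm{v}_\infty^2\le 2\norm{v}\norm{\Dp v}$ is \emph{false} for periodic grid functions (take $v$ constant, so $\Dp v=0$), because the telescoping sum has no vanishing boundary term. The paper repairs this by starting the telescoping at the index minimizing $|u_j|$, which yields the corrected form
\begin{equation*}
\norm{v}_\infty^2\le \frac1L\norm{v}_2^2+\eps\norm{\Dp v}_2^2+C(\eps)\norm{v}_2^2,
\end{equation*}
with $L=J\Dx$; with that extra $\frac1L\norm{v}_2^2$ term your argument goes through unchanged, since it is absorbed into $C(\eps)\norm{u}_2^2$ after applying \eqref{eq:sob1D}. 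The rest of the periodic case transfers exactly as you say, with the integral over $[-\pi,\pi]$ replaced by the finite sum over the modes $\theta_k=2\pi k/J$.
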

\begin{proof} We here
  treat the case of the full line only. Assume first that $m_3=0$.
  The proof follows by induction. Let $m=1$ and $n=2$. Then we have
\begin{equation*}
\begin{aligned}
\norm{\Dpm u}_2^2&= -(u, \Dp\Dm u) \\
&\le\eps \norm{\Dp\Dm u}_2^2+ C(\eps) \norm{u}_2^2.
\end{aligned}
\end{equation*}  
Since $\norm{\Dp\Dm u}_2=\norm{\Dp^2 u}_2=\norm{\Dm^2 u}_2$, we have shown \eqref{eq:sob1D} in the case with $m=1$ and $n=2$.   Assume now that \eqref{eq:sob1D} holds for all cases with $m\le N$ for some fixed but arbitrary $N$,  and all $n=m+1$. Given $m_1,m_2, n_1,n_2$ such that $m=m_1+m_2=N+1$ and $n=n_1+n_2=m+1$. We then find
\begin{equation*}
\begin{aligned}
\norm{\Dp^{m_1}\Dm^{m_2} u}_2^2&
= \abs{(\Dp^{n_1}\Dm^{n_2}u, \Dp^{m_1-(n_2-m_2)}\Dm^{m_2-(n_1-m_1)} u)} \\
&\le\eps \norm{\Dp^{n_1}\Dm^{n_2} u}_2^2+ C(\eps) \norm{\Dp^{m-n_2}\Dm^{m-n_1}u}_2^2 \\
&\le \eps \norm{\Dp^{n_1}\Dm^{n_2} u}_2^2+ C(\eps)\big(\eps_1\norm{\Dp^{m_1}\Dm^{m_2} u}_2^2+C(\eps_1) \norm{u}_2^2 \big),
\end{aligned}
\end{equation*}    
using the induction hypothesis since $m-n_2+m-n_1=m+(m-n)=N$ and $m=N+1$.  We can rewrite this as
\begin{equation*}
(1-C(\eps)\eps_1)\norm{\Dp^{m_1}\Dm^{m_2} u}_2^2\le \eps \norm{\Dp^{n_1}\Dm^{n_2} u}_2^2+ C(\eps)C(\eps_1) \norm{u}_2^2.
\end{equation*}    
Given $\eps$ we choose $\eps_1$ such that $C(\eps)\eps_1\le\frac12$, which proves the case with $m=N+1$ and $n=m+1$.  By induction we have shown  \eqref{eq:sob1D} in all cases where $n=m+1$. 

Next we show how to extend this result to $n=m+2$ for arbitrary $m$. The general case of $n>m$ follows similarly.  Let now $n_1, n_2$ be such that $n=n_1+n_2=m+2$. We now have
\begin{equation*}
\begin{aligned}
\norm{\Dp^{m_1}\Dm^{m_2} u}_2^2&
\le \eps \norm{\Dp^{n_1-1}\Dm^{n_2} u}_2^2+ C(\eps)\norm{u}_2^2 \\
&\le \eps\big(\eps_1 \norm{\Dp^{n_1}\Dm^{n_2} u}_2^2+C(\eps_1)\norm{u}_2^2\big)+ C(\eps)\norm{u}_2^2 \\
&=\eps\eps_1 \norm{\Dp^{n_1}\Dm^{n_2} u}_2^2+\big(\eps C(\eps_1)+ C(\eps)\big)\norm{u}_2^2, 
\end{aligned}
\end{equation*}    
using first that $n_1-1+n_2=n-1=m+1$. This proves  \eqref{eq:sob1D} in the general case with $m_3=0$.

For an arbitrary $m_3\in\N$ we observe that
\begin{equation}
\Dp^{m_1}\Dm^{m_2}D^{m_3}=2^{-m_3} \sum_{k=0}^{m_3} \binom{m_3}{k} \Dp^{m_1+k}\Dm^{m_2+m_3-k}
  \end{equation}
using $D=\frac12(\Dp+\Dm)$, which reduces this case to that with $m_3=0$. 
 
Consider now the inequality \eqref{eq:sob2aD}. Observe that
\begin{equation*}
 u_j^2=\Dx\sum_{k=-\infty}^{j-1}\Dp u^2_k = \Dx \sum_{k=-\infty}^{j-1} (u_k+u_{k+1}) \Dp u_k \end{equation*}    
which implies that
\begin{equation*}
 \norm{u}_\infty^2= \abs{\big((u+S^+u),\Dp u\big)}\le 
\eps \norm{\Dp u}_2^2+C(\eps)\norm{u}_2^2=
\eps \norm{\Dpm u}_2^2+C(\eps)\norm{u}_2^2. 
\end{equation*}    
Thus
\begin{equation*}
\begin{aligned}
\norm{\Dp^{m_1}\Dm^{m_2} u}_\infty^2&
\le \eps \norm{\Dp(\Dp^{m_1}\Dm^{m_2} u)}_2^2+ C(\eps)\norm{\Dp^{m_1}\Dm^{m_2} u}_2^2 \\
&\le \eps (\eps_1\norm{\Dp^{n_1}\Dm^{n_2} u)}_2^2+C(\eps_1)\norm{u}_2^2)\\
&\quad+ 
C(\eps)(\eps_1\norm{\Dp^{n_1}\Dm^{n_2} u}_2^2+ C(\eps_1)\norm{u}_2^2) \\
&\le (\eps+C(\eps)) \eps_1\norm{\Dp^{n_1}\Dm^{n_2} u}_2^2+(\eps+C(\eps))C(\eps_1)\norm{u}_2^2.
\end{aligned}
\end{equation*}    
(In the rare case that $n_1=m_1+1$ and $n_2=m_2$ we do not change the first term.) Given $\eps$ we choose 
$\eps_1$ such that $(\eps+C(\eps)) \eps_1\le\eps$. This completes the proof of \eqref{eq:sob2aD}.

The proof of \eqref{eq:sob2aD} requires some modifications in the periodic case. Let $m$ be such that $\abs{u_m}=\min_{j=0,\dots,J-1}\abs{u_j}$. 
For $j>m$ we have
\begin{equation*}
u_j^2=u_m^2+\Dx\sum_{k=m}^{j-1} \Dp u_k^2=u_m^2+\Dx\sum_{k=m}^{j-1}(u_k+u_{k+1}) \Dp u_k.
\end{equation*}    
Thus
\begin{equation*}
\begin{aligned}
\max_j\abs{u_j}^2&\le \min_j\abs{u_j}^2+\abs{\big((u+u^+),\Dp u\big)} \\
&\le \frac1L\norm{u}^2_2+ \eps \norm{\Dp u}_2^2+ \tilde C(\eps) \norm{u}_2^2\\
&\le \eps \norm{\Dpm u}_2^2+ C(\eps) \norm{u}_2^2.
\end{aligned}
\end{equation*}    
Here we have used that
\begin{equation*}
J\Dx\min_j\abs{u_j}^2\le \Dx\sum_{k=0}^{J-1}\abs{u_k}^2=\norm{u}_2^2,
\end{equation*}  
which implies
\begin{equation*}
 \min_j\abs{u_j}^2\le \frac1L\norm{u}_2^2, 
 \end{equation*}  
 where $L=J\Dx$ remains finite and nonzero as $\Dx\to0$.
 \end{proof}

\begin{remark}
We could equally well have written the inequalities \eqref{eq:sob1D}, \eqref{eq:sob2aD} as
\begin{align}
  \label{eq:sob1D*}
  \norm{\Dp^{m_1}\Dm^{m_2} D^{m_3} u}_2&\le \eps  \norm{\Dp^{n_1}\Dm^{n_2} u}_2+C(\eps )\norm{u}_2,\\
  \label{eq:sob2aD*}
  \norm{\Dp^{m_1}\Dm^{m_2} D^{m_3} u}_\infty&\le \eps  \norm{\Dp^{n_1}\Dm^{n_2} u}_2+C(\eps )\norm{u}_2.
\end{align} 
\end{remark}

\noindent {\bf Acknowledgments.}
This paper was initiated during the international research program on Nonlinear Partial Differential Equations at the Centre for Advanced Study  at the Norwegian Academy of Science and Letters in Oslo  and completed during a stay at ETH in Z\"urich. Both institutions are thanked for their hospitality.


\end{document}